\definecolor{darkergreen}{rgb}{0.0, 0.5, 0.0}
\numberwithin{equation}{section}
\def\theequation{\arabic{section}.\arabic{equation}}
\newcommand{\be}{\begin{eqnarray}}
\newcommand{\ee}{\end{eqnarray}}
\newcommand{\ce}{\begin{eqnarray*}}
\newcommand{\de}{\end{eqnarray*}}
\newtheorem{theorem}{Theorem}[section]
\newtheorem{lemma}[theorem]{Lemma}
\newtheorem{remark}[theorem]{Remark}
\newtheorem{definition}[theorem]{Definition}
\newtheorem{proposition}[theorem]{Proposition}
\newtheorem{Examples}[theorem]{Example}
\newtheorem{corollary}[theorem]{Corollary}
\newtheorem{assumption}{Assumption}[section]
\def\${|\!|\!|}
\def\proj{\mathbf{p}}
\def\Re{{\mathrm{Re}}}
\def\var{{\mathrm{Var}}}
\def\cov{{\mathrm{Cov}}}
\def\eps{\varepsilon}
\def\p{\partial}
\def\[{{\Big[}}
\def\]{{\Big]}}
\def\<{{\langle}}
\def\>{{\rangle}}
\def\({{\Big(}}
\def\){{\Big)}}
\def\bx{{\mathbf{x}}}
\def\tr{\mathrm {tr}}
\def\Ric{{\rm Ricci}}
\def\dif{{\mathord{{\rm d}}}}
\def\Hess{{\mathord{{\rm Hess}}}}
\def\min{{\mathord{{\rm min}}}}
\def\no{\nonumber}
\def\={&\!\!=\!\!&}
 \newcommand{\eqdef}{\stackrel{\mbox{\tiny def}}{=}}
\def\cE{{\mathcal E}}
\def\cF{{\mathcal F}}
\def\cL{{\mathcal L}}
\def\cP{{\mathcal P}}
\def\cQ{{\mathcal Q}}
\def\cS{{\mathcal S}}
\def\mN{{\mathbb N}}
\def\mP{{\mathbb P}}
\def\mR{{\mathbb R}}
\def\mT{{\mathbb T}}
\def\mZ{{\mathbb Z}}
\def\1{{\mathbf{1}}}
\def\sC{{\mathscr C}}
\def\sF{{\mathscr F}}
\def\sO{{\mathscr O}}
\def\sP{{\mathscr P}}
\def\sR{{\mathscr R}}
\def\E{\mathbf E}
\def\geq{\geqslant}
\def\leq{\leqslant}
\def\ge{\geqslant}
\def\le{\leqslant}
\let\emptyset\varnothing
\def\E{\mathbf{E}}
\def\CP{\mathcal{P}}
\def\Tr{\mathrm{Tr}}
\def\q{\mathfrak{q}}
\def\div{\mathord{{\rm div}}}
\def\Re{{\mathrm{Re}}}
\def\eps{\varepsilon}
\def\p{\partial}
\def\[{{\Big[}}
\def\]{{\Big]}}
\def\<{{\langle}}
\def\>{{\rangle}}
\def\({{\Big(}}
\def\){{\Big)}}
\def\bx{{\mathbf{x}}}
\def\tr{\mathrm {tr}}
\def\Ric{{\rm Ric}}
\def\dif{{\mathord{{\rm d}}}}
\def\Hess{{\mathord{{\rm Hess}}}}
\def\min{{\mathord{{\rm min}}}}
\def\tr{{\rm Tr}}
\def\no{\nonumber}
\def\={&\!\!=\!\!&}
\def\bt{\begin{theorem}}
\def\et{\end{theorem}}
\def\bl{\begin{lemma}}
\def\el{\end{lemma}}
\def\br{\begin{remark}}
\def\er{\end{remark}}
\def\bx{\begin{Examples}}
\def\ex{\end{Examples}}
\def\bd{\begin{definition}}
\def\ed{\end{definition}}
\def\bp{\begin{proposition}}
\def\ep{\end{proposition}}
\def\bc{\begin{corollary}}
\def\ec{\end{corollary}}
\def\so{\mathfrak{so}}
\def\su{\mathfrak{su}}
\def\mfg{\mathfrak{g}}
\def\geq{\geqslant}
\def\leq{\leqslant}
\def\ge{\geqslant}
\def\le{\leqslant}
\def\div{\mathord{{\rm div}}}
\def\YM{\textnormal{\tiny \textsc{ym}}}
\def\muYM{\mu^{\YM}_{N,\beta}}
\def\dg{d(\mfg)}
\def\R{\mathbb R}
\def\C{\mathbb C}
\def\<{\langle} \def\>{\rangle}
\begin{document}

\title{A stochastic analysis approach to  lattice Yang--Mills  at strong coupling}
\author{Hao Shen}
\address[H. Shen]{Department of Mathematics, University of Wisconsin - Madison, USA}
\email{pkushenhao@gmail.com}
%
%
\author{Rongchan Zhu}
\address[R. Zhu]{Department of Mathematics, Beijing Institute of Technology, Beijing 100081, China 
}
\email{zhurongchan@126.com}

\author{Xiangchan Zhu}
\address[X. Zhu]{ Academy of Mathematics and Systems Science,
Chinese Academy of Sciences, Beijing 100190, China
}
\email{zhuxiangchan@126.com}

\subjclass[2010]{37A25; 39B62; 60H10}
\keywords{}

\date{\today}


\maketitle

\begin{abstract}
We develop a new stochastic analysis approach to 
 the lattice Yang--Mills model at strong coupling in any dimension $d>1$,
 with t' Hooft scaling $\beta N$ for the inverse coupling strength.
We study their Langevin dynamics, ergodicity, functional inequalities, large $N$ limits, and mass gap.

Assuming  $|\beta| < \frac{N-2}{32(d-1)N}$
for the structure group  $SO(N)$, or  $|\beta| < \frac{1}{16(d-1)}$ for $SU(N)$,
we prove the following results.
The invariant measure for the corresponding Langevin dynamic
is unique on the entire lattice, and the dynamic is exponentially ergodic under a Wasserstein distance.
The finite volume Yang--Mills measures converge to this unique invariant measure in the infinite volume limit, for which
Log-Sobolev and Poincar\'e inequalities hold.
These functional inequalities imply that the suitably rescaled Wilson loops for the infinite volume measure
has factorized correlations and
converges in probability to deterministic limits in the large $N$ limit,
and correlations of a large class of observables
decay exponentially, namely the  infinite volume measure  has a strictly positive mass gap. Our method improves earlier results or simplifies the proofs, and provides some new perspectives to the study of lattice Yang--Mills model. 
\end{abstract}

\setcounter{tocdepth}{2}
\tableofcontents

\section{Introduction}

The purpose of this paper is to apply stochastic analysis  and ergodic theory for Markov processes
to study the lattice Yang--Mills model with structure group $G\in \{SO(N), SU(N) \}$. 
In particular, we will consider the Langevin dynamics of these models,
and under explicit strong coupling  assumptions,
we will prove uniqueness of invariant measures in  infinite volume,
log-Sobolev and Poincar\'e inequalities, with some application in
large $N$ limits of Wilson loops and exponential decay of correlations.

Lattice discretizations of the Yang--Mills theories were first 
 proposed in the physics literature  by Wilson  \cite{Wilson1974}
which lead to well-defined Gibbs measures on collections of matrices.
We refer to \cite{Chatterjee18} for a nice review on the Yang--Mills model
and its gauge invariant discretization as well as the fundamental questions for the model. Among the literature we only mention that 
approximate computations of the Wilson loop expectations  as the size $N$ of the structure group becomes large was first suggested by 't Hooft \cite{tHooft1974}, where the Yang--Mills Hamiltonian is multiplied by $\beta N$ (known as the 't Hooft scaling), which is closely related to our present article.

The problems we discuss in this paper have been of interest and studied for decades in mathematical physics. 
A closely related
 earlier paper is by Osterwalder--Seiler \cite{OS1978},
which showed that for the lattice Yang--Mills theory, when the coupling is sufficiently strong,
the cluster expansion (or high-temperature expansion in statistical mechanics language)
for the expectation values of local observables (i.e. bounded functions of finitely many edge variables)
is convergent, uniformly in volume.
The proof of this convergent cluster expansion was sketched in \cite{OS1978} since it follows similarly as
\cite{GJS1973} for $P(\phi)_2$ model (and also \cite{MR389080}); in fact it is  simpler than the $P(\phi)_2$ model in \cite{GJS1973} since the fields are bounded in lattice Yang--Mills theory.
Moreover, as explained in \cite{OS1978},  the existence of a mass gap (exponential clustering) follows from  convergence of the cluster expansion, so do existence of the infinite volume limit and analyticity of  Schwinger functions  in the inverse coupling.
Uniqueness of infinite volume limit should also follow from cluster expansion, see e.g. \cite{MR990999} for the case of the $P(\phi)_2$ model.
We also refer to the book \cite{MR785937} for these expansion techniques and results. 
As for the large $N$ limits,
in the recent papers, factorization property of the Wilson loop expectations was proved in \cite[Corollary 3.2]{Cha} and \cite{Jafar} under the assumption that $\beta$ is sufficiently small.

Given the earlier work, we revisit these problems in this article for a number of reasons.
First of all,
 the earlier work  \cite{OS1978} didn't consider 't Hooft scaling, but if we translate their results into 't Hooft scaling where the Hamiltonian is multiplied by $\beta N$ then their condition amounts to requiring $\beta N$ to be small. 
However, to our best knowledge,
 under the 't Hooft scaling $\beta N$ 
uniqueness was not known for $\beta$ in a fixed small neighborhood of the origin when $N$ is arbitrarily large (see for instance the discussion after \cite[Theorem 3.1]{Cha});
this is the reason that \cite{Cha} and \cite{Jafar} formulated 
their large $N$ results on a sequence of $N$-dependent finite volumes.
One aim of this paper is to establish uniqueness of  infinite volume measures  for $\beta$ in a {\it fixed and explicit}  small neighborhood of the origin which is {\it uniform} in $N$,
which allows us to prove
 the existence of a mass gap and large $N$ limits of Wilson loops directly in infinite volume for this range of $\beta$.

Secondly, as another motivation of this paper, 
we develop new methods  based on stochastic analysis 
and give new proofs to these results.
In these methods, the curvature properties of the Lie groups
are better exploited via the verification of the Bakry--\'Emery condition.
In particular, this allows us to perform more delicate calculations and obtain more explicit smallness condition on inverse coupling.
As another novelty we study the Langevin dynamics 
(or stochastic quantization) and we prove uniqueness of the infinite volume measures by showing that the dynamic on the entire $\mZ^d$ has a unique invariant measure.
To this end we employed coupling methods for our stochastic dynamics,
which is a variant of Kendall--Cranston's coupling.
Such stochastic coupling arguments were used earlier in the stochastic analysis 
on manifolds, but
to our best knowledge this appears to be the first time 
that such coupling arguments are used in the setting of
statistical physics or lattice quantum field theory models
with manifold target spaces.
For our coupling arguments we will also need to introduce suitable weighted 
distances on the product manifolds, 
and in our calculations a subtle comparison between the weight parameter
and the curvature plays a key role in order to obtain ergodicity.

As the third motivation, it appears to us that 
some of the proofs in this paper are simpler.
For instance, the large $N$  results on Wilson loops
follow quickly from the Poincar\'e inequality,
which simply comes from the Bakry--\'Emery condition.
Our proof of exponential decay relies on 
some earlier ideas of  Guionnet--Zegarlinski \cite{GZ}
together with our explicit bounds on 
commutators between derivatives and Markov generators 
on Lie groups. This seems to be simpler than cluster expansion,
or at least provides some new perspectives.

\subsection{Lattice Yang--Mills}
\label{sec:YM}

We first recall the basic setup and definitions of the model.

Let $\Lambda_{L}=\mZ^d\cap L\mT^d$ be a finite $d$ dimensional lattice
with side length $L$ and unit lattice spacing, and we will consider various functions on it with periodic boundary conditions.
We will sometimes write $\Lambda=\Lambda_{L}$ for short.
We say that a lattice edge of $\mZ^d$ is positively oriented if the beginning point is smaller in lexographic order than the ending point.
Let $E^+$ (resp. $E^-$) be the set of positively (resp. negatively) oriented edges,
and  denote by $E_{\Lambda_{L}}^+$, $E_{\Lambda_{L}}^-$ the corresponding subsets
of edges with both beginning and ending points in ${\Lambda_{L}}$.  Define $E\eqdef E^+\cup E^-$ and let $u(e)$ and $v(e)$ denote the starting point and ending point of an edge $e\in E$, respectively.

We write  $G$ for the Lie group
$SO(N)$ or $SU(N)$ and $\mfg$ for the associated Lie algebra $\so(N)$ or $\su(N)$.
Note that we always view $G$ as a real manifold (even for $SU(N)$), and $\mfg$ as a real vector space, and we will write
$\dg = \dim_{\R} \mfg $.

To define the lattice Yang--Mills theory we need more notation,
for which we closely follow \cite{Cha} and \cite{SSZloop}.

A {\it path}  is defined to be a sequence of edges $e_1e_2\cdots e_n$ with $e_i\in E$ and $v(e_i)=u(e_{i+1})$ for $i=1,2,\cdots, n-1$. The path is called closed if $v(e_n)=u(e_1)$.
A {\it plaquette} is a closed path of length four which traces out the boundary of a square.
Also, let $\CP_{\Lambda_{L}}$ be the set of plaquettes whose vertices are all in $\Lambda_{L}$, and
$\CP^+_{\Lambda_{L}}$ be the subset of plaquettes $p=e_1e_2e_3e_4$ such that
the beginning point of $e_1$ is lexicographically the smallest among all the vertices in $p$ and the ending point of $e_1$ is the second smallest.

The lattice Yang-Mills theory (or lattice gauge theory)
on ${\Lambda_{L}}$ for the structure group $G$, with $\beta\in\R$ the inverse coupling constant, is the
  probability measure $\mu_{\Lambda_L, N, \beta}$  on the set of all collections $Q = (Q_e)_{e\in E_{\Lambda_L}^+}$ of $G$-matrices, defined as
\begin{equation}\label{measure}
\dif\mu_{\Lambda_{L}, N, \beta}(Q)
 \eqdef Z_{{\Lambda_{L}}, N,\beta}^{-1}
\exp\Big(  \cS (Q)\Big) \prod_{e\in E^+_{\Lambda_{L}}} \dif\sigma_N(Q_e)\, ,
\end{equation}
with
\begin{equ}[e:defS]
\cS (Q) \eqdef N\beta  \Re \sum_{p\in \CP^+_{\Lambda_{L}}} \Tr(Q_p),
\end{equ}
where $Z_{\Lambda_{L},  N, \beta}$ is the normalizing constant, $Q_p \eqdef Q_{e_1}Q_{e_2}Q_{e_3}Q_{e_4}$ for a plaquette $p=e_1e_2e_3e_4$, and $\sigma_N$ is the Haar measure on $G$.
Note that for $p\in \CP^+_{\Lambda_{L}}$ the edges $e_{3}$ and $e_{4}$ are negatively oriented,
so throughout the paper we define $Q_{e}\eqdef Q_{e^{-1}}^{-1}$ for $e \in E^{-}$, where $e^{-1}$ denotes the edge with orientation reversed.
Also, $\Re$ is the real part, which can be omitted when $G=SO(N)$.

\subsection{Main Results}

We will assume the following in our main results on  lattice Yang--Mills.

\begin{assumption}\label{ass1}
	Suppose that
	\begin{align*}
		K_\cS\eqdef\begin{cases}
			\displaystyle
			\frac{N+2}4-1-8N|\beta|(d-1)>0,&\qquad
		 G=SO(N)\;,
			\\
			\displaystyle \frac{N+2}2-1-8N|\beta|(d-1)>0\;,
			&\qquad
			G=SU(N)\;.
		\end{cases}	
	\end{align*}
\end{assumption}
Assumption \ref{ass1} is equivalent to the following {\it strong coupling} assumption:
\begin{align}\label{eq:a}
	|\beta|<\begin{cases}
		\displaystyle
		\frac{1}{32(d-1)}-\frac{1}{16N(d-1)},&\qquad
		G=SO(N)\;,
		\\
		\displaystyle \frac{1}{16(d-1)},&\qquad
		G=SU(N)\;.
	\end{cases}	
\end{align}

Define the (product) topological space $\cQ\eqdef G^{E^+}$,
which will serve as our infinite volume configuration space.
By Tychonoff's theorem $\cQ$ is compact.
For each $a>1$
we define the distance $\rho_{\infty,a}$ on $\cQ$  by
\begin{equ}[e:rho-inf]
	\rho_{\infty,a}^2(Q,Q')\eqdef \sum_{e\in E^+}\frac1{a^{|e|}}\rho^2(Q_e,Q_e'),
\end{equ}
with $|e|$ being  the distance from $0$  to $e$ in $\mZ^d$.
Here the distances for different choices of $a$
 give equivalent topologies,  and we just write $\rho_\infty$ when there's no confusion.
 $\cQ$ is then a Polish space w.r.t. $\rho_\infty$.
 By standard results in topology, the topology induced by  $\rho_\infty$ is equivalent with the product topology on $\cQ$.

We can easily extend the measure $\mu_{\Lambda_{L}, N, \beta}$ to the infinite volume configuration space $\cQ$ by periodic extension, which is still denoted as $\mu_{\Lambda_{L}, N, \beta}$.
Namely, we can construct a random variable with law given by
$\mu_{\Lambda_{L}, N, \beta}$ and extend the random variable periodically,
and the law of the periodic extension gives the desired extension of measure.
 Since $G$ and $\cQ$ are compact,  $\{\mu_{\Lambda_L,N,\beta}\}_{L\ge 1}$ form a tight set.

We will consider the Langevin dynamic on $\cQ$, formally given by
\begin{equ}[e:YM-formal]
	\dif Q = \nabla \mathcal S (Q) \dif t + \sqrt{2}\dif \mathfrak B\;,
\end{equ}
with $\mathfrak B=(\mathfrak B_e)_{e\in E^+}$ being independent Brownian motions on $G$.
This is formal since we will need to ``extend'' $\nabla\cS$ to infinite volume
in a suitable sense. More precisely, the Langevin dynamic we consider is the following SDE system parametrized by $e\in E^+$:
\begin{equs}[eq:YM in]
	\dif Q_e
	&= -\frac12N\beta \sum_{p\in \cP,p\succ e}(Q_p-Q_p^*)Q_e\dif t
	-\frac12(N-1)Q_e\dif t+\sqrt2\dif B_eQ_e ,\qquad \mbox{if} \quad G=SO(N) ,
	\\
	\dif Q_e
	&= -\frac12N\beta \sum_{p\in \cP,p\succ e}\Big( (Q_p-{Q}_p^{*}) - \frac{1}{N}\tr(Q_p-{Q}_p^{*}) I_N\Big)   Q_e\dif t
	\\
	&\qquad\qquad\qquad\qquad\qquad\qquad\quad
	-\frac{N^2-1}NQ_e\dif t+\sqrt2\dif B_eQ_e, \qquad\qquad \mbox{if} \quad G=SU(N).
\end{equs}
 Here $B=(B_e)_{e\in E}$ is a collection of independent Brownian motions on the Lie algebra $\mfg$ of $G$,
and the terms linear in $Q_e$ arise from Casimir elements of the Lie algebras; we will review
these in Section~\ref{sec:Pre}.

\br
We note that the above SDE (in finite volume) was  used earlier in \cite{SSZloop} to derive the
loop equations (i.e. Dyson--Schwinger or Makeenko--Migdal equations) for Wilson loops of the model \eqref{measure}. These loop equations also hold for any infinite volume tight limit of the measures, and in particular for the unique invariant measure for $\beta$ satisfying \eqref{eq:a} as given in Theorem~\ref{th:1.3}.

The study of a quantum field theory of the form \eqref{measure} via a dynamic \eqref{e:YM-formal} 
 is also called stochastic quantization as first proposed by 
\cite{Nelson66,ParisiWu}.
\er


 We will prove that there exists a unique probabilistically strong solutions  to SDE \eqref{eq:YM in} starting from any initial data in $ \cQ$ in  Proposition \ref{lem:4.7}. Hence the solutions form a Markov process in $\cQ$ and the related semigroup is denoted by $(P_t)_{t\geq0}$.

Our first main result is as follows. 

\bt[Uniqueness and ergodicity]
\label{th:1.3}
Under Assumption \ref{ass1}, the following statements hold.

(1) The invariant measure of the Markov semigroup $(P_t)_{t\geq0}$ for the Langevin dynamic 
\eqref{eq:YM in}
 is unique. We denote this invariant measure by $\muYM$.

(2) Furthermore, every tight limit of $\{\mu_{\Lambda_L,N,\beta}\}_L$ is the same,
 and the whole sequence  $\{\mu_{\Lambda_L,N,\beta}\}_L$ converges to $\muYM$ as $L\to \infty$.

(3) Finally, the Markov semigroup $(P_t)_{t\geq0}$ is exponentially ergodic in the following sense: 
 there exists a constant $a>1$ such that
for any $\nu\in \sP(\cQ)$
\begin{equ}[e:WnuP]
	W_2^{\rho_{\infty,a}}(\nu P_t,\muYM)
	\leq C(a) e^{-\widetilde{K}_{\cS} t},\quad t\geq0,
\end{equ}
for some $\widetilde{K}_{\cS}>0$  which only depends on the constant $a$,  $d$, $\beta$ and $G$ (in particular $N$).
\et

Here $W_2^{\rho_{\infty,a}}$ is the Wasserstein distance  w.r.t. $\rho_{\infty,a}$ given for any $\mu, \nu\in \sP(\cQ)$
\begin{align*}
	W_2^{\rho_{\infty,a}}(\mu,\nu)\eqdef \inf_{\pi\in \sC(\mu,\nu)}\pi(\rho_{\infty,a}^2)^{1/2},
\end{align*}
with $\sC(\mu,\nu)$ being the set of couplings between $\mu$ and $\nu$. Remark that $\widetilde{K}_\cS$ can be explicitly given by \eqref{e:tildeK} below and gives a lower bound of spectral gap for $(P_t)_{t\geq0}$ in Wasserstein distance. In Theorem \ref{th:1} we will see that $K_\cS$  gives a lower bound of spectral gap in $L^2(\muYM)$.

\br\label{rem:bc}
The periodic boundary condition in the definition of $\{\mu_{\Lambda_L,N,\beta}\}_L$ is not essential.  By the same argument as in Theorem \ref{th:in1} the tight limit of $\{\mu_{\Lambda_L,N,\beta}\}_L$ when  changing the periodic boundary condition to Dirichlet  or other boundary conditions is also the invariant measure of the SDE \eqref{eq:YM in}, hence, is the same as $\muYM$.
\er

We remark that uniqueness for small $\beta$ could possibly also be proven using the method of Dobrushin, see e.g. \cite{Dobrushin1970}. To this end one would also need to consider the related Wasserstein metric with respect to the  Riemannian distance similarly as we do in this paper. However as we understand such an argument has not been carried out in detail for lattice Yang Mills in the literature. Here  we give a  proof based on a new idea which is a variant of  Kendall--Cranston's coupling used earlier in the stochastic analysis on manifold.

%
%


 The idea for the proof of Theorem \ref{th:1.3} is to use finite dimensional approximation, for which we construct a suitable coupling and find a suitable distance  such that
 the associated Wasserstein distance  between the two finite dimensional approximations starting from different initial distributions decays exponentially fast in time with  {\it uniform} speed.

We define the cylinder functions $ C^\infty_{cyl}(\cQ)$  by
\begin{equ}[e:cylQ]
C^\infty_{cyl}(\cQ)
=\Big\{
F: F=f(Q_{e_1},\dots, Q_{e_n}),n\in \mN, e_i\in E^+, f\in C^\infty(G^n)\Big\}.
\end{equ}
We then obtain the following log-Sobolev inequality for $\muYM$ based on Bakry--\'Emery's criterion.

\bt[Log-Sobolev inequality]
\label{th:1} 
Under Assumption \ref{ass1},
the log-Sobolev inequality holds for the measure $\muYM$ in Theorem \ref{th:1.3}.
Namely, for all cylinder functions $F\in C^\infty_{cyl}(\cQ)$ with $ \muYM(F^2)=1$,
\begin{align}\label{eqL}
	\muYM(F^2\log F^2)\leq \frac2{K_\cS }\sum_{e\in E^+}\muYM(|\nabla_{e} F|^2).
\end{align}
This implies the Poincar\'e inequality, i.e. for all cylinder functions $F\in C^\infty_{cyl}(\cQ)$,
\begin{equ}[e:PoinYM]
	\muYM(F^2)\leq \frac1{K_\cS }\sum_{e\in E^+}\muYM(|\nabla_{e} F|^2)+\muYM(F)^2,
\end{equ}
with $\nabla_{e} $ the gradient w.r.t. the variable $Q_e$.
\et
Theorem \ref{th:1} follows from Theorem \ref{th:1.3} and Corollary \ref{co:lo}, which states the log-Sobolev inequality for every tight limit of $(\mu_{\Lambda_{L},N,\beta})_{L\ge 1}$. The RHS of \eqref{eqL} is the Dirichlet form associated with the Langevin dynamic (see Proposition~\ref{th:D}). Hence, \eqref{eqL} holds for the functions in the domain of Dirichlet form by lower-semicontinuity.

As some simple applications of the  Poincar\'e inequality,
we show certain ``susceptibility'' bounds on the field $Q_e$ and $\tr(Q_p)$, see Corollaries \ref{co:c}
and \ref{co:cc}. These examples demonstrate how to choose suitable functions
in these functional inequalities to yield interesting bounds for the model.

Log-Sobolev and Poincar\'e inequalities in Theorem~\ref{th:1} follow by  checking  the Bakry--\'Emery criteria \cite{MR889476,BakryLedoux}  directly for  the finite dimensional approximation on the product manifolds.
As the Ricci curvatures of the   target manifolds $G$ are given by positive constants,
so are the Ricci curvatures of the configuration space (i.e. the product manifolds).
The Hessian of the Hamiltonian 
could also be bounded by the Ricci curvatures in the strong coupling  regimes.

As a corollary of Theorem~\ref{th:1} we obtain the following large $N$  properties of the Wilson loops.
For the rest of this paper, by a {\it loop} we mean an equivalent class of closed paths (as defined in Section~\ref{sec:YM}), 
where the equivalence relation $\sim$ is given by  cyclic permutations $e_1 e_2 \cdots e_n \sim e_ie_{i+1}\cdots e_ne_1e_2\cdots e_{i-1}$ for any $i\in \{1,...,n\}$,
and it has no two successive edges of the form $e^{-1} e$. We will always assume that a loop is non-empty, i.e. has positive number of edges. 
Given a loop
$\ell = e_1 e_2 \cdots e_n$,
recall that the Wilson loop variable $W_\ell$ is defined as
%
\begin{equ}[e:loop]
W_\ell \eqdef \Tr (Q_{e_1}Q_{e_2}\cdots Q_{e_n})\;.
\end{equ}

\bc[Large $N$ limit of Wilson loops]
\label{co:1} Under Assumption \ref{ass1}, for every Wilson loop \eqref{e:loop},
writing $\var$ and $\E$ for the variance and expectation under the measure $\muYM$  in Theorem \ref{th:1.3},  one has
\begin{equ}[e:varW]
	\var\Big(\frac1NW_\ell\Big)\leq \frac{n(n-3)}{K_\cS N}, \quad G=SO(N);\quad \var\Big(\frac1NW_\ell\Big)\leq \frac{4n(n-3)}{K_\cS N}, \quad G=SU(N) \;.
\end{equ}
In particular, we obtain the convergence
\begin{equ}[e:EW]
\Big|\frac{W_\ell}{N}-\E\frac{W_\ell}{N}\Big|\to0
\qquad
\mbox{as } N\to \infty
\end{equ}
in probability, and  the factorization property of Wilson loops, i.e. for any loops $\ell_1,\dots, \ell_m$
\begin{align*}
	\lim_{N\to\infty}\E\frac{W_{\ell_1}\dots W_{\ell_m}}{N^m}=\lim_{N\to\infty}\prod_{i=1}^m\E\frac{W_{\ell_i}}{N}.
\end{align*}
\ec

Corollary~\ref{co:1} is proven in Section~\ref{sec:3.2}.
Our proof is novel which is 
 based on the Poincar\'e inequality \eqref{e:PoinYM}.
Note that our formulation of the result is different from \cite{Cha} and \cite{Jafar}
in which the factorization property of Wilson loops was obtained by 
 taking a sequence of increasing finite lattices  $\mZ^d = \cup_{N=1}^\infty \Lambda_N$,
considering the correlations of Wilson loops over $\Lambda_N$, 
and taking infinite volume limit simultaneously
as the large $N$ limit when sending $N\to \infty$.
In our approach, we work directly in infinite volume, which seems more natural.
The subtlety here, as mentioned  above  
and also explained in \cite{Cha}, 
is that  the 't Hooft coupling places $N\beta$ instead of $\beta$ in front of the Hamiltonian
so one would require  $N\beta$ to be sufficiently small to obtain the infinite volume limit, which would appear to be problematic
when taking the large $N$ limit afterwards.
However,
thanks to our precise smallness condition on $\beta$ in \eqref{eq:a}, we can take $\beta$ small uniformly in $N$.
This also allows us to
derive bounds on the variances of Wilson loops which are explicit in terms of $N$.  Our proof based on the Poincar\'e inequality which follows from the Bakry--\'Emery condition also appears to be simpler than the arguments in aforementioned previous work.

Furthermore, we obtain the following exponential decay property of the covariance.  Consider $f\in C^\infty_{cyl}(\cQ)$ and we write $\Lambda_f$ for the set of the edges $f$ depends on. Let $|\Lambda_f|$ denote the cardinality of $\Lambda_f$.  We define
\begin{align*}
	\$ f \$_{\infty}\eqdef \sum_{e\in \Lambda_f}\|\nabla_{e}f\|_{L^\infty},
\end{align*}
where $C^\infty_{cyl}(\cQ)$ is introduced in \eqref{e:cylQ} and $\nabla_e$ is introduced in Section \ref{sec:3.1}.
We also write $d(A,B)$ for the distance between $A,B\subset E^+$, which is given by the nearest distance between the vertices in $A$ and $B$.

\bc[Mass gap]
\label{co:zmm} Suppose that Assumption \ref{ass1} holds. Writing $\cov$ for the covariance under the measure $\muYM$ in Theorem \ref{th:1.3}. For $f, g\in C^\infty_{cyl}(\cQ)$, suppose that $\Lambda_f\cap \Lambda_g=\emptyset$. It holds that
\begin{align*}
	\cov(f,g)\leq c_{1} \dg e^{-c_N d(\Lambda_f,\Lambda_g)} \left(\$ f \$_{\infty}\$ g \$_{\infty}+\|f\|_{L^2(\muYM)}\|g\|_{L^2(\muYM)}\right),
\end{align*}
where $c_{1}$ depends on $|\Lambda_f|$, $|\Lambda_g|$ and $c_N$ depends  on $K_\cS$,  $N$ and $d$.
\ec

Note that $f$ and $g$ in the above corollary can be chosen to be Wilson loops, or functions of an arbitrary number of Wilson loops,
which are of particular interest in physics.
We also remark that exponential decay of correlations (together with certain center symmetry conditions)
is also related to Wilson's area law for Wilson loops (see \cite[Theorem~2.4]{MR4278289}).
The proof of Corollary~\ref{co:zmm} is given in Section~\ref{sec:3.2}.

\medskip

We conclude this subsection
by some brief comments on the challenges or subtleties 
in the proofs of the above results.
One of the important ingredients 
in the proofs 
is to estimate  the Hessian or the general second order derivatives for the interaction $\cS$ defined in \eqref{e:defS}.
Note that a term of the form $N\tr(Q_p)$  in the interaction $\cS$ 
would ``appear'' to be of order $N^2$,
which would be too large for us to obtain the desired results.
In our proofs we will properly arrange terms and apply certain properties of the Lie groups and we will show that the relevant second order derivatives are actually at most of order $|\beta|N$. 
See the explanations before Lemma \ref{lem:4.1} and the proof of Theorem \ref{th:1.3} in Section~\ref{sec:uniqueYM} for more details. 
This is one of the crucial reasons which allow us to 
compare the
Ricci curvatures of the Lie groups and Hessians 
to verify the  Bakry--\'Emery condition
by choosing $\beta$ small, and also prove ergodicity 
using a suitable weighted distance.

\subsection{Relevant literature and possible directions}\label{sec:prev}

The study of properties of lattice gauge theories 
recently attracts much interest.
Besides the aforementioned work by \cite{Cha} and \cite{Jafar},
\cite{Chatterjee16}  computed the 
leading terms of free energies, \cite{MR3861073}
provided an elaborate description of loop expectations in the planar setting,
and \cite{ChatterjeeJafar} derived $1/N$ expansions
in the $SO(N)$ case at strong coupling.
Wilson loops (and also Wilson lines when coupled with Higgs) 
for gauge theories with 
finite structure groups were studied in \cite{MR4107931,FLV20,FLV21,Cao20,forsstromAbelian,Adhikari2021};
see also \cite{GS21} for the $U(1)$ case.
Moreover, exponential correlation decay for lattice gauge theories with finite abelian structure groups was obtained by \cite{Forsstrom2021}
using coupling argument, and for  finite non-abelian structure groups
this was proved by \cite{DdhikariCao}.  

Our present article provides a new approach to study these models
via stochastic analysis and dynamical perspective;
see also  \cite{SSZloop} for a new derivation of loop i.e. Makeenko--Migdal  equations for Wilson loops
 by such methods. 

\br
Here by ``stochastic analysis approach'', we do not mean 
the stochastic analysis approach for 2D Yang--Mills in continuum
developed earlier by \cite{GKS89} and \cite{Driver89} (See Def.~3.3 therein) in which parallel translations (which are related with Wilson loops) are formulated as stochastic differential equations.
See \cite{MR3982691} and references therein for more recent literature in this direction.
Our Yang--Mills SDE on the other hand is the stochastic dynamic for 
the connection fields on a lattice with fixed spacing, which is 
along the line of stochastic quantization.
\er

We remark that the choice of constant positive curvature Lie groups $SO(N)$ and $SU(N)$ in this article is a technical simplification for demonstrating our method, 
and it should apply as well  for other compact target spaces with constant or non-constant positive curvatures. 
For instance  it should apply to a lattice $SO(N)$ Yang--Mills model coupled with a 
Higgs field  $\Phi$ which takes values in a sphere in $\R^N$ (i.e. rotator model)
via a gauge-covariant derivative term, whose action takes the form 
$ \Re \sum_{p} \Tr(Q_p) + \sum_e |Q_e \Phi_{v(e)} - \Phi_{u(e)}|^2$.

It would certainly be interesting to show if
log-Sobolev inequalities still hold when the lattice spacing vanishes,
in the situations where the continuum limits of these models are shown or expected to exist. In this direction, on the two dimensional torus, the continuum limit of lattice approximations of the Yang--Mills measures on 1-forms was recently obtained by Chevyrev \cite{Chevyrev19YM},
who  also showed that 
a certain class of Wilson loop observables of this random 1-form 
coincide in law with the corresponding observables under the Yang--Mills measure in the sense of  \cite{Levy03}.
 Note that the Langevin dynamics 
for Yang--Mills models on the two and three dimensional continuous torus were recently constructed in
\cite{CCHS2d,CCHS3d} (see \cite{ChevyrevReview} for a review of these results), and as mentioned in \cite{CCHS2d} it would be interesting to show that the  lattice dynamics of the type \eqref{e:YM-formal} converge
to the processes constructed in the above papers in two and three dimensions.
For some of the recent progress along this direction, 
see the proof of log-Sobolev inequalities for the $\Phi^4_{2,3}$ and sine-Gordon models \cite{RolandPhi4,RolandSG},
and the 1D nonlinear $\sigma$-model 
(see \cite{AnderssonDriver,Hairer16,StringManifold})
for which the log-Sobolev inequality, ergodicity and non-ergodicity (depending on the curvature of the target manifolds) were obtained in \cite{RWZZ17, CWZZ18}.

{\bf Notation}.
Given a Polish space $E$, we write $C([0,T];E)$ for the space of continuous functions from $[0,T]$ to $E$. We use $\sP(E)$ to denote all the probability measures on $E$ with Borel $\sigma$-algebra.

\noindent\textbf{Acknowledgments.}
We would like to thank Scott Smith, Feng-Yu Wang, Xicheng Zhang and Xin Chen
for very helpful discussions.
 H.S. gratefully acknowledges financial support from NSF grants
DMS-1954091 and CAREER DMS-2044415.
 R.Z. gratefully acknowledges financial support from the NSFC (No.
 11922103).  X.Z. is grateful to
the financial supports   by National Key R\&D Program of China (No. 2020YFA0712700) and  the NSFC (No.
12090014, 12288201)  and the support by key Lab of Random Complex Structures and Data Science, Youth Innovation Promotion Association (2020003), Chinese Academy of Science. R.Z. and X.Z. are grateful to the financial support by the DFG through the CRC 1283 ``Taming uncertainty
and profiting from randomness and low regularity in analysis, stochastics and their applications''

\section{Notation and preliminaries}
\label{sec:Pre}

In this section we collect some notation and standard facts about Riemannian geometry, Lie groups and Brownian motions.

\underline{\it Riemannian manifolds.}
 Let $M$ be a Riemannian manifold of dimension $d$.
We denote by $C^\infty(M)$  the space of real-valued smooth functions on $M$.
For $x\in M$ we denote by $T_xM$  the tangent space at $x$ with inner product $\<\cdot,\cdot\>_{T_xM}$.
For $X \in T_xM$, we write $Xf$ or $X(f)$ for the differentiation of $f$ along $X$ at $x$.
For a smooth curve $\gamma:[\alpha,\beta]\to M$ the tangent vector along $\gamma$ is defined by
\begin{align*}
	\dot{\gamma}_tf=\frac{\dif}{\dif t}f(\gamma_t),\quad f\in C^\infty(M).
\end{align*}

Let $\nabla$  be the Levi-Civita connection, which is a bilinear operation associating to vector fields $X$ and $Y$ a vector field $\nabla_{Y}X$. Recall that $(\nabla_{Y}X)(x)$  depends on $Y$ only via $Y(x)$ for $x\in M$ (e.g. \cite[Remark~2.3]{MR1138207}).

For $f\in C^\infty(M)$, we denote by $\nabla f$ the gradient vector field of $f$.  We also write $\Hess(f) $ for the Hessian. 
It can be calculated in the following ways
\begin{equ}[e:hess]
\Hess_f(X,Y)  \eqdef \Hess (f) (X,Y) = \< \nabla_X \nabla f, Y\> = X(Y f ) -  (\nabla_X Y) f\;.
\end{equ}
It is a two-tensor: $\Hess_f(\varphi X,Y)=\Hess_f( X,\varphi Y)
=\varphi  \Hess_f(X,Y)$ for any $\varphi\in C^\infty(M)$ 
so $\Hess_f(X,Y)(x)$ depends only on $X(x)$ and $Y(x)$.
Since Levi-Civita connection is torsion-free, $\Hess (f)$ is symmetric in $X,Y$.

The Riemann curvature tensor $\sR(\cdot,\cdot)$  associated to vector fields $X,Y$ is an operator defined by
\begin{align*}
	\sR(X,Y)Z=\nabla_X(\nabla_YZ)-\nabla_Y(\nabla_XZ)-\nabla_{[X,Y]}Z.
\end{align*}
Let $\{W_i\}_{i=1}^d$ be an orthonormal basis of $T_xM$. The Ricci curvature tensor is defined by
\begin{equ}[e:ricci]
	\Ric(X,Y)=\sum_{i=1}^d\<\sR(X,W_i)W_i,Y\>_{T_xM}
\end{equ}
and is independent of the choice of $\{W_i\}$.
Note that $\Ric(X,Y) (x)$ depends on $X,Y$ only via $X(x),Y(x)$ for $x\in M$.

Let $\gamma$ be a geodesic. A smooth vector field $J$ is called a Jacobi field along $\gamma:[0,t]\to M$ if $\nabla_{\dot\gamma}\nabla_{\dot\gamma}J+\sR(J,\dot\gamma)\dot\gamma=0$. For any $X\in T_{\gamma_0}M$ and $Y\in T_{\gamma_t}M$, there exists a Jacobi field $J$ along $\gamma$ satisfying $J_0=X$ and $J_t=Y$ (c.f. \cite[Section 1.5]{CE75}, \cite[Section 0.4]{Wang} ).




\medskip

\underline{\it Lie groups and algebras.}
For any matrix $M$ we write $M^*$ for the conjugate transpose of $M$.
Let $M_N(\R)$ and $M_N(\C)$ be the space of real and complex $N\times N$ matrices.

For Lie groups $SO(N)$, $SU(N)$,
we write the corresponding Lie algebras
as $\so(N)$, $\su(N)$ respectively.
Every matrix $Q$ in one of these Lie groups satisfies $QQ^* = I_N$, and every matrix $X$ in one of these Lie algebras
satisfies $X + X^* = 0$.
Here $I_N$ denotes the identity matrix.

We endow $M_N(\C)$ with the Hilbert-Schmidt inner product
\begin{equ}[e:HS]
	\< X,Y\> = \Re \Tr (X Y^*)
	\qquad  \forall X,Y\in M_N(\C).
\end{equ}
We restrict this inner product to our
Lie algebra $\mfg$, which is then invariant under the adjoint action.
In particular for $X,Y\in \so(N)$ or $\su(N)$ we have
$\< X,Y\> =- \Tr (XY)$. 
Note that $\tr(XY) \in \R$ since 
we have $\tr ((XY)^*) = \tr(Y^* X^*) = \tr(XY)$,
and $\tr(A^*)= \overline{\tr(A)}$  for any $A\in M_N(\C)$.

Below $G$ is always understood as $SO(N)$ or $SU(N)$.
Every $X\in \mfg$ induces a right-invariant vector field  $\widetilde X$
on $G$, and for each $Q\in G$, $\widetilde X(Q)$  is just given by $XQ$ since $G$ is a matrix Lie group. Indeed, given any $X \in \mfg$, the curve $t \mapsto e^{tX} Q$ is well approximated near $t=0$ by $Q+tXQ$ up to an error of order $t^{2}$.

The inner product on $\mfg$ induces an inner product
on the tangent space at every $Q\in G$
via the right multiplication on $G$.
Hence, for $X,Y\in \mfg$, we have $XQ,YQ \in T_{Q}G$, and their inner product is given by $\Tr((XQ)(YQ)^*) = \Tr(XY^*)$.
This yields a bi-invariant Riemannian metric on $G$.

For any function $f\in C^\infty (G)$ and $X \in \mfg$,
the right-invariant vector field $\widetilde X$ induced by $X$ acts on $f$ at $Q\in G$ by the right-invariant
derivative
\begin{equ}[e:Xf]
\widetilde X f (Q)=  \frac{\dif}{\dif t}|_{t=0} f(e^{tX} Q).
\end{equ}
We have
\begin{equ}
\widetilde{[X,Y]} = [\widetilde X, \widetilde Y] ,
\qquad \mbox{namely,} \qquad
([X,Y] Q ) f (Q) = [XQ, YQ] f(Q),
\end{equ}
where the $[\cdot,\cdot]$   is the Lie bracket on $\mfg$ on the LHS and  the vector fields commutator  on the RHS.
Also, for the Levi-Civita connection $\nabla$ we have
\begin{equ}[e:F27]
\nabla_{\widetilde X} ( \widetilde Y) =\frac12 \widetilde{ [X,Y]  }.
\end{equ}

We refer the above facts to \cite[Appendix~F]{AGZ}, e.g. Lemma F.27 therein.

%

\medskip

\underline{\it Brownian motions.}
Denote by $\mathfrak{B}$ and $B$ the Brownian motions on a Lie group $G$ and its Lie algebra $\mfg$ respectively.
The Brownian motion $B$ is characterized by
\begin{equation}\label{eq:DefBM}
\E \Big[  \<B(s),X \> \<B(t),Y \>  \Big] = \min(s,t) \< X,Y \>
\qquad
\forall
X,Y \in \mfg.
\end{equation}
By  \cite[Sec.~1.4]{Levy11}, the Brownian motions $\mathfrak{B}$ and $B$ are related through the following SDE:
\begin{equ}[e:dB]
	\dif \mathfrak B = \dif B \circ \mathfrak B = \dif B\, \mathfrak B
	+ \frac{c_\mfg}{2} \mathfrak B \dif t,
\end{equ}
where $\circ$ is the Stratonovich product, and $\dif B\, \mathfrak B$ is in the It\^o sense.
Here the constant $c_\mfg$ is determined by
$ \sum_{\alpha} v_\alpha^2  =c_\mfg I_N$ where
$(v_\alpha)_{\alpha=1}^{\dg}$ is an orthonormal basis of $\mfg$.
Moreover, by \cite[Lem.~1.2]{Levy11},
\footnote{Note that in \cite[Lem.~1.2]{Levy11}, the scalar product differs from \eqref{e:HS} by a scalar multiple depending on $N$ and $\mfg$, so we accounted for this in the expression for $c_\mfg$ above.}
\begin{equ}[e:c_g]
	c_{\so(N)} =  -\frac12(N-1),
	\quad
	c_{\su(N)} =  -\frac{N^2-1}{N} .
\end{equ}

Denote by $\delta$  the Kronecker function, i.e. $\delta_{ij}=1$ if $i=j$ and $0$ otherwise. For any matrix $M$, we write $M^{ij}$ for its $(i,j)$th entry.
The following holds by straightforward calculations (see e.g. \cite[(2.5)]{SSZloop}):
\minilab{e:BB}
\begin{equs}[2]
	\dif\< B^{ij}, B^{k\ell}\>
	&=\frac12(\delta_{ik}\delta_{j\ell}-\delta_{i\ell}\delta_{jk})\dif t,
	&\qquad \mfg=\so(N);		\label{e:BB1}
	\\
	\dif\< B^{ij}, B^{k\ell}\>
	&= \big( -\delta_{i\ell} \delta_{jk} + \frac{1}{N} \delta_{ij}\delta_{k\ell} \big)\, \dif t\;,
	&\qquad\mfg=\su(N).				\label{e:BB3}
\end{equs}

\subsection{Product manifolds and Lie groups}
\label{sec:product}

For Riemannian manifolds $M_1, M_2$,
the tangent space of the product manifolds
$T_{(x_1,x_2)} (M_1 \times M_2)$ is isomorphic with $T_{x_1} M_1 \oplus T_{x_2} M_2$ which is endowed with
the inner product
$$\< u_1+u_2,v_1+v_2\>_{T_{(x_1,x_2)} (M_1 \times M_2)} = \< u_1,v_1\>_{T_{x_1} M_1}+\< u_2,v_2\>_{T_{x_2} M_2}.$$
For a finite collection of Riemannian manifolds $(M_e)_{e\in A}$ where $A$ is some finite set, 
the product is defined analogously.

If all $M_e$ are the same manifold $M$, the product is written as $M^{A}$.
In this case, given a point $x= (x_e)_{e\in A} \in M^A$,
 if $u_e \in T_{x_e} M_e$ for some $x_e \in M_e$,
we will sometimes view $u_e$ as a tangent vector in $T_x M^{A}$
which has zero components for all $\bar e \neq e$.
Continuing with this notation,
if $(v_e^i)_{i=1}^d$ is a basis (resp. orthonormal basis) of $T_{x_e} M_e$, then
 $(v_e^i)_{e\in A}^{i=1,...,d}$ is a basis (resp. orthonormal basis) of $T_{x} M^A$.

For Lie groups $G_1,G_2$, the group multiplication is defined on $G_1\times G_2$ componentwise.
The Lie algebra $\mfg$ of $G_1\times G_2$ is isomorphic to $\mfg_1 \oplus \mfg_2$ where $\mfg_i$ is the Lie algebra of $G_i$.
The Lie bracket on  $\mfg_1 \oplus \mfg_2$  is defined componentwise.
 If $X=(X_1,X_2) \in \mfg$,  then induced the right-invariant vector field
$\widetilde{X}(x)$ for every $x\in G_1\times G_2$
is equal to $(\widetilde X_1(x),\widetilde X_2(x))$. In particular,
\eqref{e:F27} still holds for any two right-invariant vector fields  on the  Lie group product.

With similar notation as above we can define product $G^A$ and its Lie algebra $\mfg^A$ for a finite set $A$.
 Given $X \in \mfg^A$, the exponential map $t \mapsto \text{exp}(tX)  $ is also defined pointwise as
 $\text{exp}(tX)_{e}\eqdef e^{tX_{e}}$ for each $e\in A$.

In the following we choose $G$ to be one of the matrix Lie groups as before.
Define the configuration space as the Lie group product  $\cQ_L=G^{E_{\Lambda_{L}}^+}$,
consisting of all maps $Q:e \in E_{\Lambda_{L}}^{+} \mapsto Q_{e} \in G$.
Let $\q_L=\mfg^{E_{\Lambda_{L}}^+}$ be the corresponding direct sum of $\mfg$.
Note that $\q_L$ is the Lie algebra of the Lie group $\cQ_L$.
For any matrix-valued functions $A,B$ on $E_{\Lambda_L}^+$, we denote by $AB $
the pointwise product $(A_e B_e)_{e\in E_{\Lambda_{L}}^+}$.

As above, the tangent space at $Q \in \mathcal{Q}_L$ consists of the products $XQ= (X_e Q_e)_{e\in E_{\Lambda_{L}}^+}$ with $X \in \q_L$,  and given two such elements $XQ$ and $YQ$, their inner product is defined by
\begin{equation}
	\langle XQ,YQ \rangle_{T_Q\cQ_L} = \sum_{e \in E_{\Lambda_{L}}^{+} } \text{Tr}(X_{e}Y_{e}^{*}) \nonumber.
\end{equation}
The basis of the tangent space $T_Q\cQ_L$ is given by $\{X_e^iQ : e\in E^+_{\Lambda_L} , 1 \le i \le \dg \}$
where for each $e$, $\{X_e^i\}_i$ is a basis for $\mfg$.

Given any function $f \in C^{\infty}(\mathcal{Q}_L)$, the right-invariant derivative is given by $\frac{\dif}{\dif t}|_{t=0} f(\text{exp}(tX)  Q)$.  For each $Q \in \mathcal{Q}_L$, the gradient $\nabla f(Q)$ is an element of the tangent space at $Q$ which satisfies for each $X \in \q_L$
\begin{equation}
	\langle \nabla f(Q),  XQ \rangle_{T_Q\cQ_L} = \frac{\dif}{\dif t}\Big|_{t=0} f(\text{exp}(tX)  Q)=(XQ)f.
\end{equation}
We can write $\nabla f=\sum_{i=1}^{\dg}\sum_{e\in E_{\Lambda_L}^+}(v^i_ef)v^i_e$
with $\{v^i_e : e\in E_{\Lambda_L}^+, i=1,\cdots, \dg\}$ being an orthonormal basis of $T_Q\cQ_L$.
We then define
\begin{align*}
	\nabla_{e}f\eqdef \sum_{i=1}^{\dg}(v_e^if)v_e^i,
	\qquad
	\Delta_ef\eqdef  \div\nabla_ef=\sum_{i=1}^{\dg}\<\nabla_{v_e^i}\nabla_{e}f,v_e^i\>.
\end{align*}
Here $\nabla_{e}$ and $\Delta_{e}$ can be viewed as  the gradient and the  Laplace--Beltrami operator (w.r.t. the variable $Q_e$) on
$G$ endowed with the metric given above.  


\section{Yang--Mills SDE}\label{sec:3.1}
In this section we first recall the Langevin dynamics \eqref{eq:YM in} associated to the lattice Yang--Mills model in finite volume from \cite{SSZloop}. 
We then extend the dynamics from finite volume to the whole $\mZ^d$  and prove the global well-posdness of the SDE \eqref{eq:YM in}. Furthermore, we prove that every tight limit of $(\mu_{\Lambda_L,N,\beta})_L$ is an invariant measure for the SDE \eqref{eq:YM in}.

We consider the Langevin dynamic for the measure \eqref{measure}, which is
the following SDE on $\cQ_L$
\begin{equ}[eq:YM1]
	\dif Q = \nabla \mathcal S (Q) \dif t + \sqrt{2}\dif \mathfrak B\;,
\end{equ}
with $\mathfrak B=(\mathfrak B_e)$ being independent Brownian motions on $G$.
Here $\dif \mathfrak B$ can be viewed as the white noise w.r.t. the inner product on $T_Q\cQ_L$.

We now recall the explicit expression for $ \nabla \mathcal S$. To this end, we introduce the following notation. For a plaquette $p=e_1e_2e_3e_4 \in \CP$, we write $p\succ e_1$ to indicate that $p$ is a plaquette that starts from edge $e_{1}$.
Note that for each edge $e$, there are $2(d-1)$ plaquettes in $\CP$
such that $p\succ e$.
For {\it any} Lie algebra $\mfg$ embedded into $M_N(\C)$ (denoted as $M_N$ below for short), it forms a closed subspace of $M_N$,
 and therefore $M_N$ has an orthogonal decomposition $M_N=\mfg \oplus \mfg^\perp$.
Given $M\in M_N$, we denote by $\proj M \in \mfg$ the orthogonal projection onto $\mfg$.
\begin{lemma}\label{lem:gradS}
Writing $\cdot$ for matrix multiplication, for each $e \in E_{\Lambda_{L}}^{+}$ we have
\begin{equ}[e:grad]
\nabla\cS(Q)_e=  N\beta \sum_{p\in \cP_{\Lambda_{L}}, p\succ e} \proj Q_{p}^{*} \cdot (Q_{e}^{*})^{-1}\;.
\end{equ}
\end{lemma}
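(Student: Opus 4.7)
The plan is a direct variational calculation using the right-invariant parametrization of $T_{Q_e}G$. I will isolate the dependence of $\cS$ on $Q_e$ using the symmetries of the plaquette sum, compute the right-invariant derivative along an arbitrary $X \in \mfg$, and then identify the gradient via orthogonal projection onto $\mfg$.

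\textbf{Step 1: Rewriting the sum.} Fix $e\in E^+_{\Lambda_L}$. By cyclic invariance of the trace, $\Tr(Q_p)$ depends only on the cyclic class of $p$. Moreover, since $Q_{e'}^{-1}=Q_{e'}^*$ for $Q_{e'}\in G$, reversing the orientation of $p$ conjugates the trace, $\Tr(Q_{p^{-1}})=\overline{\Tr(Q_p)}$, so $\Re\Tr(Q_p)$ is invariant under both cyclic permutation and orientation reversal, i.e.\ depends only on the unoriented geometric square traced by $p$. Since $\{p\in\cP_{\Lambda_L}:p\succ e\}$ contains exactly one representative of each of the $2(d-1)$ squares through $e$, I can replace the canonical $\cP^+_{\Lambda_L}$ representative by the one starting with $e$, obtaining
\begin{equation*}
\cS(Q)=N\beta\sum_{p\in\cP_{\Lambda_L},\,p\succ e}\Re\Tr(Q_p)+(\text{terms independent of }Q_e),
\end{equation*}
where for each $p=e\,e_2e_3e_4$ with $p\succ e$ we have $Q_p=Q_e\cdot Q_{e_2}Q_{e_3}Q_{e_4}$.

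\textbf{Step 2: Right-invariant derivative.} For $X\in\mfg$, substituting $Q_e\mapsto e^{tX}Q_e$ sends $Q_p\mapsto e^{tX}Q_p$ for each $p\succ e$, hence by \eqref{e:Xf}
\begin{equation*}
(XQ_e)\cS(Q)=N\beta\sum_{p\succ e}\Re\Tr(X\,Q_p).
\end{equation*}
Using the Hilbert--Schmidt inner product from \eqref{e:HS}, $\Re\Tr(XQ_p)=\langle X,Q_p^*\rangle$. Since $X\in\mfg$ and $\proj$ is orthogonal projection onto $\mfg$ in this inner product, $\langle X,Q_p^*\rangle=\langle X,\proj Q_p^*\rangle_\mfg$, so the right-hand side becomes $N\beta\,\langle X,\sum_{p\succ e}\proj Q_p^*\rangle_\mfg$.

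\textbf{Step 3: Reading off $\nabla_e\cS$.} Write $\nabla_e\cS(Q)=YQ_e$ with $Y\in\mfg$ (the gradient lies in $T_{Q_e}G$, which is $\mfg\cdot Q_e$). Bi-invariance of the metric gives $\langle YQ_e,XQ_e\rangle_{T_{Q_e}G}=\langle Y,X\rangle_\mfg$, so matching with Step 2 for every $X\in\mfg$ forces $Y=N\beta\sum_{p\succ e}\proj Q_p^*$, and hence
\begin{equation*}
\nabla_e\cS(Q)=YQ_e=N\beta\sum_{p\succ e}\proj Q_p^*\cdot(Q_e^*)^{-1},
\end{equation*}
using $Q_e=(Q_e^*)^{-1}$ for $Q_e\in G$. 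The only mild obstacle is the plaquette bookkeeping in Step 1 (orientations, starting vertices, and the canonical $\cP^+_{\Lambda_L}$ convention); once the orientation-reversal symmetry of $\Re\Tr$ is invoked, passing between the $\cP^+_{\Lambda_L}$ and $p\succ e$ conventions is a routine relabelling.
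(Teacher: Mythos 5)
Your proof is correct, and it matches the approach used in the paper (which defers to \cite[Lemma 3.1]{SSZloop} and then summarizes exactly the rewriting you carry out in Step~1: replacing each $Q_p$ with $p$ touching $e$ or $e^{-1}$ by a cyclic/orientation-reversed representative of the form $Q_e Q_\cdot Q_\cdot Q_\cdot$, using invariance of $\Re\Tr$). The subsequent identification of the gradient via the right-invariant derivative, the Hilbert--Schmidt pairing, and orthogonal projection onto $\mfg$ is the standard computation the reference performs, so there is nothing genuinely new here relative to the paper's route.
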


\begin{proof}
See \cite[Lemma 3.1]{SSZloop}. We remark that in this calculation of the gradient of $\cS$, for each fixed $e$,
we replace the $Q_p$ in \eqref{e:defS} where $p$ contains $e$ or $e^{-1}$
by a product of the form $Q_e Q_\cdot Q_\cdot Q_\cdot$, which does not change the trace.
This motivates our introduction of the notation $p\succ e$.
\end{proof}

The above result holds for general matrix Lie groups, and
for our specific choices of Lie groups, we have the  SDE system \eqref{eq:YM1} on the finite lattice $\Lambda_L$
more explicitly as
\begin{equ}[eq:YM]
	\dif Q_e = \nabla \mathcal S (Q)_e \dif t
	+ c_{\mfg} Q_e \dif t + \sqrt{2} \dif B_e Q_e \;,
	\qquad  (e\in E_\Lambda^+)
\end{equ}
\begin{equ}[e:DS]
 \nabla\cS(Q)_e=
\begin{cases}
 \displaystyle
 -\frac12N\beta \sum_{p\in \cP_{\Lambda_L},p\succ e}(Q_p-Q_p^*)Q_e\;,
&\qquad
G=SO(N)\;,
\\
 \displaystyle - \frac12 N\beta \sum_{p\in \cP_{\Lambda_L} ,p\succ e}
	\Big( (Q_p-{Q}_p^{*}) - \frac{1}{N}\tr(Q_p-{Q}_p^{*}) I_N\Big)   Q_e\;,
&\qquad
G = SU(N)\;.
\end{cases}
\end{equ}

We recall the following two results from \cite[Lemmas 3.2-3.3]{SSZloop}.

\bl\label{lem:exist} For fixed $N\in\mN$ and any initial data $Q(0)=(Q_e(0))_{e\in E_{\Lambda_L}^+}\in \cQ_L$, there exists a  unique solution $Q=(Q_e)_{e\in E_{\Lambda_L}^+}\in C([0,\infty);\cQ_L)$ to \eqref{eq:YM}.
\el

\begin{lemma}\label{lem:inv}
	\eqref{measure} is invariant under the SDE system \eqref{eq:YM}.
\end{lemma}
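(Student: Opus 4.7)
The plan is to verify that $\mu_{\Lambda_L,N,\beta}$ is invariant by identifying \eqref{eq:YM} as the Itô form of a Stratonovich Langevin SDE on the product Lie group $\cQ_L$, computing the associated generator, and then checking symmetry of that generator in $L^2(\mu_{\Lambda_L,N,\beta})$ by integration by parts against Haar measure.

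First I would rewrite \eqref{eq:YM} in Stratonovich form. Using \eqref{e:dB}, which gives $\dif \mathfrak{B}_e = \dif B_e\, \mathfrak{B}_e + \tfrac{c_\mfg}{2}\mathfrak{B}_e\dif t$, the Itô drift term $c_\mfg Q_e \dif t$ in \eqref{eq:YM} is precisely the correction produced by converting $\sqrt{2}\,\dif \mathfrak B_e$ from Stratonovich to Itô. Hence \eqref{eq:YM} is the Itô form of
\begin{equation*}
\dif Q = \nabla \cS(Q)\dif t + \sqrt{2}\,\dif \mathfrak{B},
\end{equation*}
a Stratonovich SDE on $\cQ_L$ driven by independent Brownian motions on $G$ for each edge. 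Its infinitesimal generator is therefore
\begin{equation*}
Lf = \Delta f + \<\nabla \cS,\nabla f\>,\qquad \Delta = \sum_{e\in E^+_{\Lambda_L}}\Delta_e,
\end{equation*}
where $\Delta_e,\nabla_e$ are the Laplace--Beltrami operator and gradient on the $e$-th factor $G$ as introduced in Section~\ref{sec:product}, and $\nabla\cS$ is given by \eqref{e:grad}. To make this rigorous one applies Itô's formula to $f\in C^\infty(\cQ_L)$, using \eqref{e:BB1}--\eqref{e:BB3} together with the representation of right-invariant derivatives in \eqref{e:Xf}, and the identification $\Delta_e = \sum_i v_e^i v_e^i + \div$-correction, which collapses to the Laplace--Beltrami operator on $G$ because on a bi-invariant Lie group the sum $\sum_i \widetilde{v^i}\widetilde{v^i}$ of squares of an orthonormal basis of right-invariant vector fields is exactly the Laplace--Beltrami operator (the Itô correction term $c_\mfg Q_e$ in \eqref{eq:YM} absorbs the Stratonovich-to-Itô discrepancy, which is precisely the piece that makes the generator coincide with $\Delta + \<\nabla\cS,\nabla\>$).

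Having identified $L$, the remaining step is to show $\int Lf\,\dif\mu_{\Lambda_L,N,\beta}=0$ for all $f\in C^\infty(\cQ_L)$, which by density implies invariance. Writing $\dif\mu_{\Lambda_L,N,\beta} = Z^{-1}e^{\cS}\prod_e\dif\sigma_N(Q_e)$, and using the standard fact that the Laplace--Beltrami operator on a compact Lie group is self-adjoint with respect to Haar measure (equivalently, right-invariant vector fields have vanishing Haar divergence due to bi-invariance of $\sigma_N$), integration by parts on the product manifold yields
\begin{equation*}
\int \Delta f \cdot e^{\cS}\prod_e\dif\sigma_N = -\int \<\nabla f, \nabla e^{\cS}\>\prod_e\dif\sigma_N = -\int \<\nabla f,\nabla \cS\>\, e^{\cS}\prod_e\dif\sigma_N,
\end{equation*}
so $\int(\Delta f + \<\nabla\cS,\nabla f\>)e^{\cS}\prod_e\dif\sigma_N = 0$, as desired. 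The main (minor) obstacle is bookkeeping: correctly matching the Itô correction $c_\mfg Q_e$ with the $\tfrac{c_\mfg}{2}$ appearing in \eqref{e:dB} (accounting for the $\sqrt{2}$), and verifying that the generator really reduces to $\Delta + \<\nabla\cS,\nabla\>$ using the explicit second-moment formulas \eqref{e:BB1}--\eqref{e:BB3}; everything else is a clean integration-by-parts argument that exploits bi-invariance of Haar measure.
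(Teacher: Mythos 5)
Your proof is correct and follows the standard Langevin approach: identify \eqref{eq:YM} as the It\^o form of a Stratonovich SDE whose generator is $\cL_L = \sum_e \Delta_e + \<\nabla\cS,\nabla\cdot\>$, then verify symmetry in $L^2(\mu_{\Lambda_L,N,\beta})$ by integrating by parts against Haar measure. The paper cites \cite{SSZloop} for this lemma rather than reproducing the proof, but the Dirichlet-form discussion surrounding \eqref{eq:L} (``Using integration by parts formula for the Haar measure...'' leading to $\cE^L(F,G) = -\int \cL_L F\, G\, \dif\mu_{\Lambda_L,N,\beta}$) rests on exactly the same computation, so your argument matches what the paper implicitly uses.
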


By global well-posedness of the SDE \eqref{eq:YM}, the solutions form a Markov process in $\cQ_L$. We use $(P_t^L)_{t\geq0}$ to denote the associated semigroup, i.e. for $f\in C^\infty(\cQ_L)$, $(P_t^Lf)(x)=\E f(Q(t,x))$ for $x\in \cQ_L$, where $Q(t,x)$ denotes the solution at time $t$ to \eqref{eq:YM} starting from $x\in \cQ_L$. 
We can also write down the Dirichlet form associated with $(P_t^L)_{t\geq0}$. More precisely, for $F\in C^\infty(\cQ_L)$ we consider the following  symmetric quadratic form
\begin{align*}
	\cE^L(F,F)&\eqdef\int \<\nabla F,\nabla F\>_{T_Q\cQ_L}\dif \mu_{\Lambda_{L},N,\beta}
	\\&=\sum_{e\in E_{\Lambda_{L}}^+}\int\<\nabla_{e}F,\nabla_{e} F\>\dif \mu_{\Lambda_{L},N,\beta}
	\\&=\sum_{e\in E_{\Lambda_{L}}^+}\int\tr(\nabla_{e}F(\nabla_{e} F)^*)\dif \mu_{\Lambda_{L},N,\beta}.
\end{align*}
Using integration by parts formula for the Haar measure,
we have that $(\cE^L, C^\infty(\cQ_L))$ is closable,  and its closure $(\cE^L,D(\cE^L))$ is a regular Dirichlet form on $L^2(\cQ_L,\mu_{\Lambda_L,N,\beta})$. (c.f. \cite{Fukushima}.)

Recall $\cS $ in \eqref{e:defS}.
We  write the generator associated to the above Dirichlet form for $F\in C^\infty(\cQ_L)$ as
\begin{align}\label{eq:L}
	\cL_L F=	\sum_{e\in E_{\Lambda_{L}}^+}\Delta_{e}F+\sum_{e\in E_{\Lambda_{L}}^+}\<\nabla \cS(Q)_e,\nabla_{e} F\>.
\end{align}
We  use $D(\cL_L)$ to denote the domain of the generator. Moreover, $\cE^L(F,G)=-\int \cL_L FG\dif \mu_{\Lambda_{L},N,\beta}$ for $F,G\in C^\infty(\cQ_L)$. 
It is easy to see that $(P_t^L)_{t\geq0}$ is a $\mu_{\Lambda_{L},N,\beta}$-version of the $L^2(\mu_{\Lambda_L,N,\beta})$-semigroup associated with the Dirichlet form $(\cE^L,D(\cE^L))$.  (c.f. \cite{MaRockner} or \cite{Fukushima}.)

Recall that $\cQ=G^{E^+}$. Now we extend $\Lambda_L$ to $\mZ^d$ and consider the  SDE \eqref{eq:YM in} on the entire space. To this end, we write $M_N^{E^+} = \prod_{e\in E^+} M_N$ for the direct product of infinitely many vector spaces $M_N$
(i.e. an element of $M_N^{E^+}$ is allowed to have infinitely many non-zero components).
We define  a norm  on $M_N^{E^+}$  by
\begin{align}\label{eq:norm}
	\|Q\|^2\eqdef \sum_{e\in E^+}\frac1{2^{|e|}}|Q_e|^2,
\end{align}
with $|Q_e|^2=\<Q_e,Q_e\>$ for $\<\cdot,\cdot\>$ as in \eqref{e:HS}  and $|e|$ given by the distance from $0$ to $e$ in $\mZ^d$.
(More precisely, $|e|$ is the minimum of the distances from the two vertices of $e$ to $0$.) Now we give existence and uniqueness of solutions to \eqref{eq:YM in}. 

\bp\label{lem:4.7}
Fix $N\in\mN, \beta\in\mR$. For any $Q^0\in \cQ$, there exists a unique probabilistically strong solution $Q$ to \eqref{eq:YM in} in $C([0,\infty);\cQ)$ starting from any $Q^0$. Namely, for a given probability space  $(\Omega,\cF, \mP)$ and  Brownian motion $(B_e)_{e\in E^+}$ on it, there exists an $(\cF_t)_{t\geq0}$-adapted process $Q\in C([0,\infty);\cQ)$  and $Q$ satisfy \eqref{eq:YM in} $\mP$-a.s. with $(\cF_t)_{t\geq0}$ given by normal filtration generated by the Brownian motion $(B_e)_{e\in E^+}$.
\ep

\begin{proof}
For every initial data $Q^0\in \cQ$,
 we can easily find $Q^L(0)\in \cQ_L$ with the periodic extension still denoted by $Q^L(0)$
  such that $\|Q^L(0)-Q^0\|\to0$ as $L\to\infty$.
Indeed, we can set $Q^L(0)$ as follows (the specification on $E^+_{\Lambda_L}\backslash E^+_{\Lambda_{L-1}}$ is just to ensure periodic boundary condition):
\begin{equ}
	Q_e^L(0)=
	\begin{cases}
		\displaystyle
		Q_e(0)&\qquad
		e\in E^+_{\Lambda_{L-1}}
		\\
		\displaystyle I_N
		&\qquad
		e\in E^+_{\Lambda_L}\backslash E^+_{\Lambda_{L-1}}\;.
	\end{cases}
\end{equ}
By Lemma \ref{lem:exist} we obtain a unique solution $Q^L\in C([0,\infty);\cQ_L)$ to \eqref{eq:YM} from $Q^L(0)$. We could also extend $Q^L$ to $\cQ$ by periodic extension.
Since $\cQ$ is compact,
 the marginal laws of $\{Q^L\}$ at each $t\ge 0$ form a tight set in $\cQ$.

 Furthermore,  using the SDE \eqref{eq:YM}, for $t\geq s\geq0$
\begin{align*}
	Q_e^L(t)-Q_e^L(s)=\int_s^t\nabla \cS(Q^L)_e\dif r+\int_s^tc_{\mfg}Q_e^L\dif r+\sqrt2\int_s^t\dif B_eQ_e^L.
\end{align*}
By It\^o's formula and the fact that
$Q_e^L\in G$ which is {\it compact}, we have the following bound for $p\geq 1$ and $0\leq s,t\leq T$
\begin{align*}
	\E|Q_e^L(t)-Q_e^L(s)|^{2p}\leq C_{N,\beta,p,T}(|t-s|^{2p}+|t-s|^p),
\end{align*}
where $C_{N,\beta,p,T}$ is a positive constant and may change from line to line.
Since the above constant is independent of $e$, we have
\begin{align*}
	\E\|Q^L(t)-Q^L(s)\|^{2p}\leq C_{N,\beta,p,T}(|t-s|^{2p}+|t-s|^p).
\end{align*}
Hence, by Kolmogorov criterion we have for $\alpha<1/2$
\begin{align*}
	\sup_L\E\Big(\sup_{s\neq t\in [0,T]}\frac{\|Q^L(t)-Q^L(s)\|}{|t-s|^\alpha}\Big)<\infty.
\end{align*}
Hence, the laws  of $\{Q^L\}$, which are denoted by $\{\mP^L\}$ form a tight set in $C([0,\infty);\cQ)$ equipped with the distance
\begin{align*}
	\widetilde\rho(Q,Q')\eqdef \sum_{n=0}^\infty 2^{-n}\Big(1\wedge \sup_{t\in [n,n+1]}\|Q(t)-Q'(t)\|\Big), \quad Q,Q'\in C([0,\infty);\cQ).
\end{align*}
We write  $\mP^Q$ for one tight limit. For simplicity we still write  $\{\mP^L\}$ for the converging subsequence.
Since $(C([0,\infty);\cQ), 	\widetilde\rho)$ is a Polish space, existence follows from the usual Skorohod Theorem and
taking limit on the both sides of the equation.
More precisely, there exists a stochastic basis $(\tilde \Omega,\tilde\sF,\tilde \mP)$ and $C([0,\infty);\cQ)$-valued random variables $\{\tilde Q^L\}_L, \tilde Q$ on it such that $\tilde Q^L \stackrel{d}{=} \mP^L$, 
$\tilde Q \stackrel{d}{=} \mP^Q$ and $\tilde Q^L\to \tilde Q$ in $C([0,\infty);\cQ)$ $\tilde \mP$-a.s., $L\to\infty$. 
 As a result, for every  $F \in C^\infty_{cyl}(\cQ)$, which can be viewed as function on $\cQ_L$ for $L$ large enough, we know that
\begin{align*}
	F(\tilde Q^L(t))-F(Q^L(0))-\int_0^t\cL_L F(\tilde Q^L(s))\dif s
\end{align*}
is a $\tilde\mP$-martingale, where $\cL_L$ is as in \eqref{eq:L}.
Letting $L\to \infty$
\begin{align*}
	F(\tilde Q(t))-F(Q^0)
	-\sum_{e\in E^+}\int_0^t \Big(\Delta_{e}F(\tilde Q(s))+\<Z_e,\nabla_{e}F\>(\tilde Q(s))\Big)\dif s
\end{align*}
is a $\tilde\mP$-martingale with 
\begin{equ}\label{def:Ze}
	Z_{e}(Q)\eqdef
	\begin{cases}
		\displaystyle
		-\frac12N\beta \sum_{p\in \cP,p\succ e}(Q_p-Q_p^*)Q_e\;,
		&\qquad
		G=SO(N)\;,
		\\
		\displaystyle - \frac12 N\beta \sum_{p\in \cP ,p\succ e}
		\Big( (Q_p-{Q}_p^{*}) - \frac{1}{N}\tr(Q_p-{Q}_p^{*}) I_N\Big)   Q_e\;,
		&\qquad
		G=SU(N)\;.
	\end{cases}
\end{equ}
We then obtain a martingale solution to \eqref{eq:YM in}.
By martingale representation theorem we could construct a stochastic basis and on it Brownian motions $(\overline B_e)_{e\in E^+}$ and ${\overline Q}\in C([0,\infty),\cQ)$ with law given by $\mP^Q$ such that $\overline Q$ and $(\overline B_e)_{e\in E^+}$ satisfy SDE  \eqref{eq:YM in}, which gives the existence of probabilistically weak solutions to SDE  \eqref{eq:YM in}.

Now we prove pathwise uniqueness:
Consider two solutions  $Q, Q'\in C([0,T];\cQ)$ starting from the same initial data $Q(0)\in \cQ$ and we apply It\^o's formula to calculate $\dif \|Q-Q'\|^2$. Since $Q_e, Q_e'\in G$ for every $e\in E^+$,
by the Burkholder--Davis--Gundy inequality  and \eqref{e:BB} for the stochastic integral we obtain
\begin{align*}
\E & \sup_{t\in [0,T]}|Q_e-Q'_e|^2
\\
&\leq C_{N,\beta,T}\int_0^T\E|Q_e-Q'_e|^2\dif s+C_{N,\beta,T}\sum_{p\in \cP,p\succ e}\sum_{\bar e\in p}\int_0^T\E|Q_e-Q'_e||Q_{\bar e}-Q'_{\bar e}|\dif s,
\end{align*}
where $C_{N,\beta,T}$ may change from line to line.
We then use
\begin{align*}
	2|Q_{\bar e}-Q'_{\bar e}||Q_e-Q'_e|\leq |Q_{\bar e}-Q'_{\bar e}|^2+|Q_e-Q'_e|^2,
\end{align*}
to obtain
\begin{align*}
	\frac1{2^{|e|}}\E\sup_{t\in [0,T]}|Q_e-Q'_e|^2
	&\leq C_{N,\beta,T} \frac1{2^{|e|}}\int_0^T\E|Q_e-Q'_e|^2\dif s
	\\
	&+C_{N,\beta,T}\sum_{p\in \cP, p\succ e}\sum_{e\neq \bar e\in p}\frac1{2^{|\bar e|}}\int_0^T\E|Q_{\bar e}-Q'_{\bar e}|^2\dif s.
\end{align*}
Summing over $e$ we get
	\begin{align*}
		\E\sup_{t\in [0,T]}\|Q-Q'\|^2\leq C_{N,\beta,T} \int_0^T\E\|Q-Q'\|^2\dif s.
	\end{align*}
Hence, pathwise uniqueness follows by Gronwall's lemma.
By Yamada--Watanabe Theorem \cite{Kurtz},
weak existence and pathwise uniqueness gives us  existence and uniqueness of strong solution.
In particular one can consider other boundary conditions for finite $L$ and the infinite volume limit solution is the same which is the unique solution to SDE \eqref{eq:YM in}.
\end{proof}

By Proposition \ref{lem:4.7}, the solutions to \eqref{eq:YM in} form a Markov process in $\cQ$.
We denote by $(P_t)_{t\geq0}$ the associated semigroup.
 As we are in the compact setting, it is easy to obtain the tightness of the field  $(\mu_{\Lambda_{L},N,\beta})_L$ in $\cQ$ as $L\to \infty$. Since by Lemma \ref{lem:inv} $\mu_{\Lambda_{L},N,\beta}$ is an invariant measure for \eqref{eq:YM}, we then obtain the following result.

\bt\label{th:in1} Every tight limit $\mu_{N,\beta}$ of $\{\mu_{\Lambda_{L},N,\beta}\}$ is an invariant measure for \eqref{eq:YM in}.
\et

\begin{proof}
Suppose that a subsequence -- still denoted by $\mu_{\Lambda_{L},N,\beta}$ for simplicity -- converges to $\mu_{N,\beta}$ weakly in $\cQ$.
	We start from the unique solutions $Q_L$ to equation \eqref{eq:YM} with initial distribution $\mu_{\Lambda_{L},N,\beta}$ and the unique solutions $Q$ to \eqref{eq:YM in}  with initial distribution $\mu_{N,\beta}$.
	By exactly the same arguments as in the proof of Proposition \ref{lem:4.7}, we know that the laws of $\{Q_L\}$ are also tight in $C([0,T];\cQ)$ and every tight limit satisfies equations \eqref{eq:YM in}  with initial distribution $\mu_{N,\beta}$.
	By  uniqueness of solution to \eqref{eq:YM in}  from Proposition \ref{lem:4.7},
	we have that the whole sequence of the laws of $\{Q_L\}$ converge to the law of $Q$ in $C([0,T];\cQ)$. Since by Lemma \ref{lem:inv} $\mu_{\Lambda_{L},N,\beta}$ is an invariant measure for \eqref{eq:YM}, the result follows.
\end{proof}

We can also write the Dirichlet form and generator associated with \eqref{eq:YM in}. 
Recall $ C^\infty_{cyl}(\cQ)$ defined in \eqref{e:cylQ}.
For every tight limit $\mu_{N,\beta}$ and $F\in C^\infty_{cyl}(\cQ)$ we define
the following  symmetric quadratic form
\begin{equs}[e:EFF]
	\cE^{\mu_{N,\beta}}(F,F)&\eqdef\sum_{e\in E^+}\int\<\nabla_{e}F,\nabla_{e} F\>\dif \mu_{N,\beta}
	\\
	&=\sum_{e\in E^+}\int\tr(\nabla_{e}F(\nabla_{e} F)^*)\dif \mu_{N,\beta}.
\end{equs}
By \eqref{eq:L} and letting $L\to\infty$ we have that for $F, G\in C^\infty_{cyl}(\cQ)$
\begin{align}\label{eq:E}
	\cE^{\mu_{N,\beta}}(F,G)=-\int \cL F\, G\, \dif \mu_{N,\beta},
\end{align}
with
\begin{align*}
	\cL F\eqdef\sum_{e\in E^+}\Delta_{e}F+\sum_{e\in E^+}\<Z_e,\nabla_{e} F\>,
\end{align*}
for $Z_e$ in \eqref{def:Ze}.

\bp\label{th:D}
$(\cE^{\mu_{N,\beta}},C^\infty_{cyl}(\cQ))$ is closable and its closure $(\cE^{\mu_{N,\beta}},D(\cE^{\mu_{N,\beta}}))$ is a regular Dirichlet form on $L^2(\cQ,\mu_{N,\beta})$.
\ep

\begin{proof}
It is sufficient to prove the closability of $(\cE^{\mu_{N,\beta}},C^\infty_{cyl}(\cQ))$.
Let $\{F_n\}_{n\in \mN}\subset C^\infty_{cyl}(\cQ)$ be such that
\begin{align*}
\lim_{n\to\infty}\mu_{N,\beta}(F^2_n)=0,
\qquad \lim_{n,m\to\infty}\cE^{\mu_{N,\beta}}(F_n-F_m,F_n-F_m)=0.
\end{align*}
Using \eqref{eq:E}, for $G\in C^\infty_{cyl}(\cQ)$ we have
\begin{align*}
	\cE^{\mu_{N,\beta}}(G,F_n)=-\int \cL G \, F_n \, \dif \mu_{N,\beta}\to0.
\end{align*}
Hence, the result follows from \cite[Chapter I. Lemma 3.4]{MaRockner}.
\end{proof}

\section{Log-Sobolev and Poincar\'e inequalities and applications}\label{sec:3.2}

In this section we prove log-Sobolev inequality under the usual Bakry--\'Emery condition (see \eqref{c1} below). As applications we obtain large $N$ limit, factorization property of rescaled Wilson loops and the exponential decay of a large class of observables. 

\subsection{Log-Sobolev and Poincar\'e inequalities}

In this section we first prove Log-Sobolev and Poincar\'e inequalities for the probability measure $\mu_{\Lambda_L,N,\beta}$ on the finite dimensional compact manifold $\cQ_L$. We then let $L\to\infty$ to derive the log-Sobolev inequality for every tight limit of $\mu_{\Lambda_L,N,\beta}$. As simple application we give a proof of correlation bounds  (or susceptibility bounds) for the field $Q$, and
that for the ``microscopic Wilson loops'' $\tr(Q_p)$ for plaquettes $p$. 

Below to verify  the Bakry--\'Emery's condition we  need to calculate
$\Hess_{\cS}(v,v)(Q)$ and $\Ric(v,v)(Q)$ for  $v\in T_Q\cQ_L$ and $Q\in \cQ_L$, 
and we recall \eqref{e:hess}\eqref{e:ricci} for their definitions.
Following the convention in Section~\ref{sec:product},
we write
\begin{equ}[e:v]
v=(v_e)_{e\in E^+_\Lambda}=\sum_{e\in E_\Lambda^+}X_eQ_e
\end{equ}
with $X_e \in \q_L$ being zero for all components except for the component $e$. We also write $|v|^2=\<v,v\>_{T_Q\cQ_L}$.

We first compute $\Hess_{\cS}(v,v)$ for $v\in T_Q \cQ_L$.
Note that as a ``naive'' guess,
 $\cS$ defined in \eqref{e:defS} would appear to 
be of order $N^2$, since the trace of the orthogonal or unitary matrix $Q_p$ would be generally bounded by $N$ 
and there is another factor $N$ outside the summation.
If the Hessian of $\cS$ was indeed of order $N^2$, or $N^p$ for any $p>1$,
then  in 
Assumption \ref{ass1} we would never be able to 
fix $\beta$ small uniformly in $N$ and ensure that $K_\cS$ is strictly positive when $N$ gets large. Fortunately in the next lemma by properly arranging terms and using H\"older inequalities we prove that the Hessian is actually at most of order $N$. 

\bl\label{lem:4.1} 
For $v=XQ\in T_Q\cQ_L$ we have
\begin{align}\label{eq:hess}
	|\Hess_{\cS}(v,v)| \leq 8(d-1)N|\beta||v|^2.
\end{align}
\el

\begin{proof}
 In the proof we omit the subscript $L$ for simplicity. We view $v = XQ\in T_Q\cQ_L$ as a right-invariant vector field on $\cQ_L$ generated
by $X \in \q_L$. By \eqref{e:F27} (and the discussion in Sec.~\ref{sec:product}) we have $\nabla_v v =0$. We apply the second identity in \eqref{e:hess} and using \eqref{e:v} we have
\begin{equ}[e:XQXQ]
\Hess_{\cS}(v,v) = v(v(\cS)) = \sum_{e,\bar e\in E_\Lambda^+ } ( X_{\bar e}Q_{\bar e})(X_eQ_e)\cS.
\end{equ}
Recall $\cS$ from \eqref{e:defS}, which is a sum
over  plaquettes $p \in \CP^+_{\Lambda_{L}}$, and is also
a linear (affine) function in each variable $Q_e$.
Since $p\in \cP^+$, we can write $Q_p = Q_{e_1}Q_{e_2}Q_{e_3}^*Q_{e_4}^*$ where $e_1,e_2,e_3,e_4\in E^+$.
Then it is easy to calculate the derivatives using the definition \eqref{e:Xf}, for instance
\begin{equ}[e:X3Q]
( X_{e_3} Q_{e_3} ) Q_p
= \frac{\dif}{\dif t}\Big|_{t=0} Q_{e_1}Q_{e_2}  (e^{t X_{e_3}}Q_{e_3})^* Q_{e_4}^*
= Q_{e_1}Q_{e_2}Q_{e_3}^* X_{e_3}^* Q_{e_4}^*
\end{equ}
which is effectively just inserting the matrix $X_{e_3}^*$.

Note that:

(1) For the terms with $e=\bar e$, the term involving plaquette $p$ in $\cS$ 
	is non-zero if and only if
the plaquette $p$  contains $e$ or $e^{-1}$. In this case, we write $p\in \cP_e$, and there will be $2(d-1)$ such plaquettes $p$.
Direct calculation as in \eqref{e:X3Q} yields a result of the following form
\begin{equ}
 ( X_{e}Q_{e})(X_eQ_e)\tr (Q_p ) =
 \tr( Y_1 Q_{e_1} Y_2 Q_{e_2}Q_{e_3}^* Y_3^* Q_{e_4}^* Y_4^*)
\end{equ}
for some matrices $Y_1,Y_2, Y_3, Y_4$,
in which  three of them are $I_N$ and one of them is $X_e^2$.

(2) For the terms with $e\neq \bar e$, the summand on the RHS of \eqref{e:XQXQ} is non-zero if and only if
there exists a plaquette $p$ which contains both $e$ or $e^{-1}$ and $\bar e$ or $\bar e^{-1}$.
In this case, we write $p\in \cP_{e,\bar e}$ and there will be only one such plaquette, and
\begin{equ}
 ( X_{\bar e}Q_{\bar e})(X_eQ_e)\tr (Q_p ) =
 \tr( Y_1 Q_{e_1} Y_2 Q_{e_2}Q_{e_3}^* Y_3^* Q_{e_4}^* Y_4^*)
\end{equ}
where two of $Y_1,Y_2, Y_3, Y_4$ are $I_N$, and the other two are $X_e$ and $X_{\bar e}$.

In either case, there are two occurrences of $X$, so by cyclic invariance of trace we can write the result into one of the following forms:
$$
\tr (Q X_{e} \cdot \tilde Q X_{\bar e}),
\qquad
\tr (Q X_{e} \cdot (\tilde Q X_{\bar e})^*),
\qquad
\tr ( (Q X_{e})^* \cdot \tilde Q X_{\bar e}),
\qquad
\tr ((Q X_{e})^* \cdot (\tilde Q X_{\bar e})^*)
$$
for some $Q,\tilde Q\in G$. By the Cauchy--Schwarz inequality for the Hilbert--Schmidt inner product,
each of these terms is bounded by
$$
\Big(\tr (Q X_{e} \cdot (Q X_{e})^*)\Big)^{\frac12} \; \Big(\tr (\tilde Q X_{\bar e} \cdot (\tilde Q X_{\bar e})^*)\Big)^{\frac12}
= |X_e| |X_{\bar e}|
$$
and when  $e\neq \bar e$ this is bounded by $\frac12 (|X_e|^2+|X_{\bar e}|^2)$.

Therefore we have
\begin{equ}
\frac{1}{N}\sum_{e=\bar e\in E_\Lambda^+ } |( X_{\bar e}Q_{\bar e})(X_eQ_e)\cS|
\le
 \sum_{e \in E_\Lambda^+ } \sum_{p \in \cP_e} |\beta||X_e|^2
=
 \sum_{e \in E_\Lambda^+ } 2|\beta|(d-1) |X_e|^2
 =
  2|\beta|(d-1) |v|^2
\end{equ}
and
\begin{equs}
\frac{1}{N}&
\sum_{e \neq \bar e\in E_\Lambda^+ }  | ( X_{\bar e}Q_{\bar e})(X_eQ_e)\cS|
\le
 \sum_{e \neq \bar e\in E_\Lambda^+ } \sum_{p \in \cP_{e,\bar e}}\frac{|\beta|}2 (|X_e|^2+|X_{\bar e}|^2)
 \\
&=
\sum_{p \in \cP^+_\Lambda} \sum_{e \neq \bar e \in E_\Lambda^+} 1_{p \in \cP_{e,\bar e}}
\cdot \frac{|\beta|}2 (|X_e|^2+|X_{\bar e}|^2)
=
\sum_{p \in \cP^+_\Lambda} \sum_{e  \in E_\Lambda^+} 1_{p \in \cP_{e}}
\cdot 3 |\beta||X_e|^2
\\
&= \sum_{e  \in E_\Lambda^+}   \sum_{p \in \cP_{e}}
3|\beta| |X_e|^2
= \sum_{e  \in E_\Lambda^+}
6(d-1)|\beta| |X_e|^2
 =
6(d-1) |\beta||v|^2
\end{equs}
which implies \eqref{eq:hess}.
\end{proof}

We denote the Riemannian distance on $G$  by $\rho$.
We write $\rho_L$ for the induced Riemannian distance  on $\cQ_L$ given by
\begin{align*}
	\rho_L(Q,Q')^2\eqdef \sum_{e\in E_{\Lambda_{L}}^+}\rho(Q_e,Q'_e)^2, \quad Q,Q'\in \cQ_L.
\end{align*}
For any $\mu, \nu\in \sP(\cQ_L)$, we introduce the Wasserstein distance as
\begin{align*}
	W_p^{\rho_L}(\mu,\nu)\eqdef \inf_{\pi\in \sC(\mu,\nu)}\pi(\rho_L^p)^{1/p},
\end{align*}
with $\sC(\mu,\nu)$ being the set of couplings between $\mu$ and $\nu$.

We then have the following result using the Bakry--\'Emery condition \eqref{c1} and \cite[Theorem 5.6.1]{Wang}, which was first proved by
\cite{Bakry97} and
\cite{RS05}.

\bt\label{th:4.2} 
Under Assumption \ref{ass1}, the following hold.
\begin{enumerate}
\item The dynamic defined by the SDE \eqref{eq:YM} is exponentially ergodic in the sense that
\begin{align}\label{eq:er}
	W_2^{\rho_L}(\delta_QP_t^L,\delta_{\bar Q}P_t^L)\leq e^{-K_\cS t}\rho_L(Q,\bar{Q}), \quad t\geq 0, \quad  Q,\bar Q\in \cQ_L.
\end{align}
\item  For $1<p<2$
\begin{align}\label{eq:er1}
	W_p^{\rho_L}(\mu P_t^L,\nu P_t^L)\leq e^{-K_\cS t}W_p^{\rho_L}(\mu ,\nu ), \quad t\geq 0, \quad\mu,\nu\in \sP(\cQ_L),
\end{align}
\end{enumerate}
In particular,  invariant measure of $(P_t^L)_{t\geq0}$ is unique.
 \et


\begin{proof}
Using \cite[Theorem 5.6.1(1)(11)(12)]{Wang}  we know that
 \eqref{eq:er},  \eqref{eq:er1} and
the following condition are
 all equivalent:  for every $v=XQ\in T_Q \cQ_L$,
\begin{align}\label{c1}
	\Ric(v,v)- \<\nabla_v\nabla \cS,v\>\geq K_\cS|X|^2.
\end{align}
Here we recall that $|v|^2=|X|^2$
and $\<\nabla_v\nabla \cS,v\>=\Hess_{\cS}(v,v)$.
By \cite[(F.6)]{AGZ},  for any tangent vector $u$ of $G$,
\begin{align*}
	\Ric(u,u)= \Big(\frac{\alpha(N+2)}4-1\Big)|u|^2,
\end{align*}
with $\alpha=1, 2$ for $SO(N)$ and $SU(N)$ 
respectively.
Since $\Ric(v,v)=\sum_{e}\Ric(v_e,v_e)$
and $|X|^2=\sum_{e} |X_e|^2$,
 we have
\begin{equ}[e:Ricvv]
	\Ric(v,v)= \Big(\frac{\alpha(N+2)}4-1\Big)|X|^2.
\end{equ}
By Lemma~\ref{lem:4.1} and
definition of $K_\cS$ in Assumption \ref{ass1},
we obtain  \eqref{c1}, and therefore
\eqref{eq:er},  \eqref{eq:er1} follow.

Uniqueness of invariant measure follows from \eqref{eq:er1} by letting $t\to \infty$.
%
%
\end{proof}

 \br  In general, if we do not require $K_\cS $ to be strictly positive as in
 Assumption \ref{ass1}, \eqref{eq:er}-\eqref{eq:er1} still hold,
 and \eqref{c1} is also equivalent with the following statements: for any $ t\geq0, f\in C^1(\cQ_L)$
\begin{equs}\label{eq:nap}
	|\nabla P_t^Lf|  &  \leq e^{-K_\cS t}P_t^L|\nabla f|,
\\
	P_t^L(f^2\log f^2)-(P_t^Lf^2)\log (P_t^Lf^2) & \leq \frac{2(1-e^{-2K_\cS t})}{K_\cS } P_t^L|\nabla f|^2.\label{eq:pt}
\end{equs}
We refer to \cite[Theorem 5.6.1]{Wang} for these results and more equivalent statements.
\er


As \eqref{c1} is the Bakry--\'Emery's condition, we have the following log-Sobolev inequality (c.f. \cite[Theorem 5.6.2]{Wang}). In fact, it follows from taking integral w.r.t. $\mu_{\Lambda_L,N,\beta}$ on the both sides of \eqref{eq:pt} and letting $t\to\infty$.

\bc Under Assumption \ref{ass1}, the log-Sobolev inequality holds for each $L>1$, i.e.
for $F\in C^\infty(\cQ_L)$ with $ \mu_{\Lambda_L,N,\beta}(F^2)=1$,
\begin{align*}
	\mu_{\Lambda_L,N,\beta}(F^2\log F^2)\leq \frac2{K_\cS }\cE^L(F,F).
\end{align*}
This implies the Poincar\'e inequality: for $F\in C^\infty(\cQ_L)$,
\begin{align}\label{eq:p1}
	\mu_{\Lambda_L,N,\beta}(F^2)\leq \frac1{K_\cS }\cE^L(F,F)+\mu_{\Lambda_L,N,\beta}(F)^2.
\end{align}
\ec

We could view any probability measure $\nu$ in $\sP(\cQ_L)$ as a probability measure  in $\sP(\cQ)$ by periodic extension. Namely, we can construct a random variable with law given by $\nu \in \sP(\cQ_L)$ and extend the random variable periodically. The law of the periodic extension gives the desired extension of $\nu$.
 Since $G$ is compact,  $\{\mu_{\Lambda_L,N,\beta}\}_{L}$ form a tight set and passing to a subsequence we obtain a tight limit, which is denoted by $\mu_{N,\beta}$.
 Hence, by approximation we have the following results.

 \bc\label{co:lo}
 Under Assumption \ref{ass1},
 the log-Sobolev inequality holds, i.e. for cylinder functions $F\in C^\infty_{cyl}(\cQ)$ with $\mu_{N,\beta}(F^2)=1$,
 \begin{equ}[e:log]
 	\mu_{N,\beta}(F^2\log F^2)\leq \frac2{K_\cS }\cE^{\mu_{N,\beta}}(F,F).
 \end{equ}
This implies the Poincar\'e inequality: for cylinder functions $F\in C^\infty_{cyl}(\cQ)$
 \begin{equ}[e:PoinE]
 	\mu_{N,\beta}(F^2)\leq \frac1{K_\cS }\cE^{\mu_{N,\beta}}(F,F)  +\mu_{N,\beta}(F)^2.
 \end{equ}
 \ec

In Section~\ref{sec:uniqueYM} we will prove Theorem \ref{th:1.3} which will then identify the tight limit
$\mu_{N,\beta}$ in \eqref{e:log} and \eqref{e:PoinE} as the measure $\muYM$ in Theorem \ref{th:1.3}; this then proves Theorem~\ref{th:1}.

\br\label{re1} By the Poincar\'e inequality \eqref{eq:p1} and \eqref{e:PoinE}, the semigroup $(P_t^L)_{t\geq0}$ and  $(P_t)_{t\geq0}$ satisfy
\begin{align*}
\|	P_t^Lf-\mu_{\Lambda_L,N,\beta}(f)\|_{L^2(\mu_{\Lambda_L,N,\beta})}\leq e^{-tK_\cS }\|f\|_{L^2(\mu_{\Lambda_L,N,\beta})},
\end{align*}
and 
\begin{align*}
	\|	P_tf-\mu_{N,\beta}(f)\|_{L^2(\mu_{N,\beta})}\leq e^{-tK_\cS }\|f\|_{L^2(\mu_{N,\beta})}.
\end{align*}
(c.f. \cite[Theorem 1.1.1]{Wang}). However, this does not imply the uniqueness of the invariant measure for $(P_t)_{t\geq0}$.
\er

The following two results are simple applications of the Poincar\'e inequality. 

\bc\label{co:c}
Under Assumption \ref{ass1},
for every  
$e_0\in E^+$ and every unit vector $E$ in $M_N$ 
we have
\begin{align*}
	\sum_{e\in E_{\Lambda_L}^+} \cov_{N,\beta,L}\Big(\<Q_{e_0},E\>,\<Q_e,E\>\Big)\leq  \begin{cases}
		1 / K_\cS ,&\quad G= SO(N),\\
		2/ K_\cS ,&\quad G=SU(N).
	\end{cases}
\end{align*}
Here $\cov_{N,\beta,L}$ means covariance w.r.t. the measure $\mu_{\Lambda_L,N,\beta}$.
In particular,
\begin{align*}
	\Big|\sum_{e\in E_{\Lambda_L}^+ \backslash\{e_0\}} \cov_{N,\beta,L}\Big(\<Q_{e_0},E\>,\<Q_e,E\>\Big)\Big|\leq \begin{cases}
		2 / K_\cS ,&\quad G= SO(N),\\
		4 / K_\cS ,&\quad G=SU(N).
	\end{cases}
\end{align*}
\ec



\begin{proof}
	Let $f= |E_{\Lambda_L}^+|^{-\frac12}\sum_{e\in E_{\Lambda_L}^+} \<Q_e,E\>$. By direct calculation, one has $\nabla \<Q_e,E\>=\proj(EQ_e^*)Q_e$, which implies that
	$|\nabla f|^2\leq \gamma$ with $\gamma=1$ for $G=SO(N)$ and $\gamma=2$ for $G=SU(N)$, where for $G=SU(N)$ we used that for any matrices 
	$Q, Q'\in M_N$
	\begin{align}\label{eq:tr}
		\tr\Big((Q-\frac1N\tr(Q)I_N)(Q'-\frac1N\tr(Q')I_N)\Big)
		=\tr(QQ')-\frac1N\tr(Q)\tr(Q').
	\end{align}
	Hence, by the Poincar\'e inequality \eqref{eq:p1} we get
	\begin{align*}
		\frac1{|E_{\Lambda_L}^+|}\sum_{e,e'\in E_{\Lambda_L}^+}\cov_{N,\beta,L}\Big(\<Q_e,E\>,\<Q_{e'},E\>\Big)\leq \frac\gamma{K_\cS }.
	\end{align*}
	With periodic boundary condition we have translation invariance,
	so for fixed edge $e_0$
	\begin{align*}
		\sum_{ e\in E_{\Lambda_L}^+ } \!\! \cov_{N,\beta,L}\Big(\<Q_{e_0},E\>,\<Q_e,E\>\Big)\leq \frac\gamma{K_\cS },
	\end{align*}
	which implies the first result and
	\begin{align*}
		\Big|  \sum_{ e\in E_{\Lambda_L}^+ \backslash\{e_0\}} \!\!\!\!  \cov_{N,\beta,L}\Big(\<Q_{e_0},E\>,\<Q_e,E\>\Big)\Big|\leq \frac\gamma{K_\cS }+\var_{N,\beta,L}\Big(\<Q_{e_0},E\>\Big),
	\end{align*}
	where we used triangle inequality and  $\var_{N,\beta,L}$ means variance under $\mu_{\Lambda_L,N,\beta}$.
	Now we take $g=\<Q_{e_0},E\>$ and have $|\nabla g|^2\leq \gamma$. Then by the Poincar\'e inequality \eqref{eq:p1}
	$$\var_{\Lambda_L,N,\beta}\Big(\<Q_{e_0},E\>\Big)\leq \frac\gamma{K_\cS }.$$
	Thus the second result follows.
\end{proof}

\bc \label{co:cc}
Under Assumption \ref{ass1}, it holds that for  every plaquette $p$ in $\mathcal{P}^+_{\Lambda_{L}}$
\begin{align*}
	\sum_{\bar p\in \mathcal{P}_{\Lambda_L}} \cov_{N,\beta,L}\Big(\mathrm{ReTr}Q_p,\mathrm{ReTr} Q_{\bar p}\Big)\leq \begin{cases}
		8N(d-1) / K_\cS ,&\quad G= SO(N),\\
		16N(d-1) / K_\cS ,&\quad G=SU(N).
	\end{cases}
\end{align*}
Here $\cov_{N,\beta,L}$ means covariance w.r.t. the measure $\mu_{\Lambda_L,N,\beta}$. In particular,
for  every plaquette $p$ in $\mathcal{P}^+_{\Lambda_{L}}$
\begin{align*}
	\Big|\sum_{\bar p\in \mathcal{P}_{\Lambda_L},\bar p\neq p} \cov_{N,\beta,L}\Big(\mathrm{ReTr}Q_p,\mathrm{ReTr} Q_{\bar p}\Big)\Big|\leq \begin{cases}
		\big(8N(d-1)+4N\big) / K_\cS ,&\quad G= SO(N),\\
		\big(16N(d-1)+8N \big)/ K_\cS,&\quad G=SU(N).
	\end{cases}
\end{align*}
\ec
\begin{proof}
	Let $f= |\mathcal{P}_{\Lambda_L}^+|^{-\frac12}     \sum_{\bar p\in \mathcal{P}_{\Lambda_L}^+}\mathrm{ReTr} Q_{\bar p}$.
	By the same calculation as \eqref{e:grad} and \eqref{e:DS} for the action $\cS$ we have
	\begin{align*}
		\nabla f_{e}=
		\begin{cases}
			\displaystyle
			-\frac1{2|\mathcal{P}_{\Lambda_L}^+|^{1/2}} \sum_{p\in \cP_\Lambda,p\succ e}(Q_p-Q_p^*)Q_e\;,
			&\qquad
			G= SO(N) \;,
			\\
			\displaystyle
			-\frac1{2|\mathcal{P}_{\Lambda_L}^+|^{1/2}}\sum_{p\in \cP_\Lambda ,p\succ e}
			\Big( (Q_p-{Q}_p^{*}) - \frac{1}{N}\tr(Q_p-{Q}_p^{*}) I_N\Big)   Q_e\;,
			&\qquad
			G = SU(N)\;.
		\end{cases}
	\end{align*}
	Thus in $SO(N)$ case we have
	\begin{align*}
		|\nabla f_e|^2
		=\frac1{4|\mathcal{P}_{\Lambda_L}^+|}\sum_{p,\bar p\in \cP_\Lambda,p,\bar p\succ e}\mathrm{Tr}\Big((Q_p-Q_p^*)(Q_{\bar p}-Q_{\bar p}^*)^*\Big)\leq \frac{4N(d-1)^2}{|\mathcal{P}_{\Lambda_L}^+|},
	\end{align*}
	which implies that
	\begin{align*}|\nabla f|^2\leq 8N(d-1).
	\end{align*}
	Here we used
	$|\mathcal{P}_{\Lambda_L}^+|
	=\frac{2(d-1)}{4}|E_{\Lambda_L}^+|
	$, since each plaquette has $4$ edges and each edge is adjacent to $2(d-1)$ plaquettes.
	Hence,  applying the Poincar\'e inequality to $f$ we get
	\begin{align*}
		\frac1{|\mathcal{P}_{\Lambda_L}^+|}\sum_{p,\bar p\in \mathcal{P}_{\Lambda_L}^+}\cov_{N,\beta,L}\Big(\mathrm{ReTr}Q_p,\mathrm{ReTr} Q_{\bar p}\Big)\leq \frac{8N(d-1)}{K_\cS }.
	\end{align*}
	We  choose periodic boundary condition to have translation invariance and we get for fixed  $p$
	\begin{align*}
		\sum_{\bar p\in \mathcal{P}_{\Lambda_L}^+}\cov_{N,\beta,L}\Big(\mathrm{ReTr}Q_p,\mathrm{ReTr} Q_{\bar p}\Big)\leq \frac{8N(d-1)}{K_\cS }.
	\end{align*}
	Thus the first result follows and
	\begin{align*}
		\Big|
		\sum_{\bar p\in \mathcal{P}_{\Lambda_L}^+,\bar p\neq p}\cov_{N,\beta,L}\Big(\mathrm{ReTr}Q_p,\mathrm{ReTr} Q_{\bar p}\Big)
		\Big|
		\leq \frac{8N(d-1)}{K_\cS }+\var\Big(\mathrm{ReTr}Q_p \Big) .
	\end{align*}
	Moreover, take $f=\mathrm{ReTr} Q_{\bar p}$  and $|\nabla f|^2\leq 4N$ 
	and we obtain
	$$
	\var\Big(\mathrm{ReTr}Q_p\Big)\leq \frac{4N}{K_\cS }.
	$$
	Thus the second result follows for $SO(N)$ case.
	The result for the $SU(N)$ case follows by similar arguments and using \eqref{eq:tr}.
\end{proof}

\subsection{Application I: large $N$ limit of Wilson loops}

In the following we give the proof of Corollary \ref{co:1} by applying the Poincar\'e inequality.

\begin{proof}[Proof of Corollary \ref{co:1}]
Since Theorem \ref{th:1.3}   identifies any tight limit
$\mu_{N,\beta}$ as the measure $\muYM$, 
it suffices to prove the result for any tight limit $\mu_{N,\beta}$.
	We apply the Poincar\'e inequality \eqref{e:PoinE} to Wilson loops defined in \eqref{e:loop}.
	Consider the $SO(N)$ case.
	Let
	\begin{align*}
		f(Q)=\frac1NW_\ell=\frac1N\tr(Q_{e_1}Q_{e_2}\dots Q_{e_n}).
	\end{align*}
	We get
	\begin{align*}
		\mu_{N,\beta}(f^2)-\mu_{N,\beta}(f)^2=\var\Big(\frac1NW_\ell\Big).
	\end{align*}
	We then need to calculate $\nabla f$ which appears on the RHS of the Poincar\'e inequality.
	For an edge which appears in the location $x$ of the loop $\ell$, we write
	$$
	Q_\ell = \prod_{i=1}^n  Q_{e_i},
	\qquad
	Q_{a_x}=\prod_{i=1}^{x-1}Q_{e_i},
	\qquad
	Q_{b_x}=\prod_{i=x+1}^nQ_{e_i}.
	$$
	We then have $W_\ell=\tr(Q_\ell)$.
	For each $e\in E^+$, we may have an edge $e_x$ in $\ell$ which is $e$ or $e^{-1}$, so by straightforward calculation we have
	\begin{equation}\label{eq:Well}
		\aligned
		(\nabla W_\ell)_{e}=&-\frac12\sum_{x=1}^n\1_{e_x=e}(Q_{e_x}Q_{b_x}Q_{a_x}-Q_{a_x}^*Q_{b_x}^*Q_{e_x}^*)Q_{e}
		\\&+\frac12\sum_{x=1}^n\1_{e_x=e^{-1}}(Q_{b_x}Q_{a_x}Q_{e}^*-Q_{e}Q_{a_x}^*Q_{b_x}^*) Q_{e}.
		\endaligned
	\end{equation}
	Here, the calculation is similar as in Lemma~\ref{lem:gradS} (see \cite{SSZloop}).
	Namely, when $e_x=e$, by cyclic invariance of trace,
	we write $W_\ell=\tr(Q_{e_x}Q_{b_x}Q_{a_x})$, and for $X\in \mfg$
	we compute
	\begin{equs}[e:dW1]
		\partial_t |_{t=0}  \tr(e^{tX}Q_{e_x}Q_{b_x}Q_{a_x})
		&=\tr(X Q_{e_x}Q_{b_x}Q_{a_x})
		= \< X, \proj (Q_{e_x}Q_{b_x}Q_{a_x})^*\>
		\\
		&= \< X Q_{e_x}, \proj (Q_{e_x}Q_{b_x}Q_{a_x})^*  Q_{e_x}\>
	\end{equs}
	where $\proj$ is defined in  Lemma~\ref{lem:gradS}  and is the orthogonal projection of a matrix to the Lie algebra $\mfg$ of skew-symmetric matrices, and $X Q_{e_x}$ is a tangent vector at $ Q_{e_x}$.
On the other hand if $e_x=e^{-1}$, we write $W_\ell=\tr(Q_{b_x}Q_{a_x}Q_{e}^*)$, so
 for $X\in \mfg$ similar calculation as above yields
	\begin{equ}[e:dW2]
	\tr(Q_{b_x}Q_{a_x}Q_{e}^* X^*)
		= \< X, \proj (Q_{b_x}Q_{a_x}Q_{e}^* )\>
		= \< X Q_{e}, \proj (Q_{b_x}Q_{a_x}Q_{e}^* ) Q_{e}\>
	\end{equ}
which gives the second term on the RHS of \eqref{eq:Well}.

	Using \eqref{eq:Well} we have
	\begin{align*}
		|(\nabla  W_\ell)_{e}|^2
		&=\frac14\sum_{x,y=1}^n\1_{e_x=e_y=e}
		\tr\Big((Q_{e_x}Q_{b_x}Q_{a_x}Q_{e_x}-Q_{a_x}^*Q_{b_x}^*)(Q_{e_y}Q_{b_y}Q_{a_y}Q_{e_y}-Q_{a_y}^*Q_{b_y}^*)^*\Big)
		\\
		&-\frac14\sum_{x,y=1}^n\1_{e_x^{-1}=e_y=e}
		\tr\Big((Q_{b_x}Q_{a_x}-Q_{e}Q_{a_x}^*Q_{b_x}^*Q_{e})(Q_{e_y}Q_{b_y}Q_{a_y}Q_{e_y}-Q_{a_y}^*Q_{b_y}^*)^*\Big)
		\\
		&-\frac14\sum_{x,y=1}^n\1_{e_x=e_y^{-1}=e}
		\tr\Big((Q_{e_x}Q_{b_x}Q_{a_x}Q_{e_x}-Q_{a_x}^*Q_{b_x}^*)(Q_{b_y}Q_{a_y}-Q_{e}Q_{a_y}^*Q_{b_y}^*Q_{e})^*\Big)
		\\
		&+\frac14\sum_{x,y=1}^n\1_{e_x=e_y=e^{-1}}
		\tr\Big((Q_{b_x}Q_{a_x}-Q_{e}Q_{a_x}^*Q_{b_x}^*Q_{e})(Q_{b_y}Q_{a_y}-Q_{e}Q_{a_y}^*Q_{b_y}^*Q_{e})^*\Big).
	\end{align*}
	Note that the trace of any $SO(N)$ matrix is bounded by $N$, and therefore each of the four traces above is bounded by $4N$.
	Summing over $e\in E^+$, we see that the Dirichlet form
	term in the Poincar\'e inequality is bounded as follows:
	\begin{equs}\label{eq:ef}
		\cE^{ \mu_{N,\beta}}(f,f) &= \sum_{e\in E^+}  \mu_{N,\beta}   (|\nabla_{e} f|^2)  \\
		&\leq
		\frac{1}{N}\sum_{e\in E^+}\sum_{x,y=1}^n
		\Big(\1_{e_x=e_y=e}
		+\1_{e_x^{-1} =e_y=e}
		+\1_{e_x =e_y^{-1}=e}
		+\1_{e_x=e_y=e^{-1}}\Big).
	\end{equs}
	For any  edge $e\in E^+$ we let $A(e)$ be the number of locations in $\ell$ where $e$ occurs and  $B(e)$ be the number of locations in $\ell$ where $e^{-1}$ occurs.
	\eqref{eq:ef} is then bounded by
	\begin{align*}
		\frac{1}{N}\sum_{e\in E^+}(A(e)+B(e))^2\leq \frac{n(n-3)}{N},
	\end{align*}
	where we used $\sum_{e\in E^+}(A(e)+B(e))=n$ and $A(e)+B(e)\leq n-3$.
	The Poincar\'e inequality then yields
	\begin{align*}
		\var\Big(\frac1NW_\ell\Big)\leq \frac1{K_\cS }\frac{n(n-3)}{N}.
	\end{align*}
	Letting $N\to\infty$, \eqref{e:EW} follows for the $SO(N)$ case.
	
	For $G=SU(N)$ we  choose, with $\iota=\sqrt{-1}$,
	\begin{equs}
		f_{R}(Q) & =\frac1N\mathrm{Re}W_\ell=\frac1N\mathrm{Re}\tr\Big(Q_{e_1}Q_{e_2}\dots Q_{e_n}\Big),
		\\
		f_{I}(Q) & =\frac1N\mathrm{Im}W_\ell=-\frac1N\mathrm{Re}\tr\Big(\iota Q_{e_1}Q_{e_2}\dots Q_{e_n}\Big)
	\end{equs}
	to obtain the result for the real and imaginary  parts. It is sufficient to calculate $\nabla\mathrm{Re} W_\ell$. Besides the terms in \eqref{eq:Well} we also have the following additional terms
	\begin{align*}
		&\frac12\sum_{x=1}^n\1_{e_x=e}\frac1N\tr(Q_{e_x}Q_{b_x}Q_{a_x}-Q_{a_x}^*Q_{b_x}^*Q_{e_x}^*)Q_{e}
		\\&-\frac12\sum_{x=1}^n\1_{e_x=e^{-1}}\frac1N\tr(Q_{b_x}Q_{a_x}Q_{e_x}-Q_{e}Q_{a_x}^*Q_{b_x}^*)Q_{e}.
	\end{align*}
	(This is similar with how the second case in \eqref{e:DS} was derived, namely, the projection $\proj$ appearing in \eqref{e:dW1}\eqref{e:dW2} should also make the matrices traceless in the $SU(N)$ case.)
	Noting that \eqref{eq:tr} 
	we have 
	\begin{align*}
		|(\nabla  \mathrm{Re}W_\ell)_{e}|^2
		\leq
		2\sum_{x,y=1}^n
		\Big(\1_{e_x=e_y=e}
		+\1_{e_x^{-1} =e_y=e}
		+\1_{e_x =e_y^{-1}=e}
		+\1_{e_x=e_y=e^{-1}}\Big)N.
	\end{align*}
	Summing over $e\in E^+$ we get
	\begin{align*}
		\cE(f_R,f_R)
		&\leq
		\frac{2}{N}\sum_{e\in E^+}\sum_{x,y=1}^n
		\Big(\1_{e_x=e_y=e}
		+\1_{e_x^{-1} =e_y=e}
		+\1_{e_x =e_y^{-1}=e}
		+\1_{e_x=e_y=e^{-1}}\Big)
		\\ &
		\leq\frac{2n(n-3)}N.
	\end{align*}
	Similarly, we get
	\begin{align*}
		\cE(f_I,f_I)\leq\frac{2n(n-3)}N.
	\end{align*}
	Hence, \eqref{e:EW} holds for $SU(N)$.
	
	To prove the factorization property,
	by the Cauchy--Schwarz inequality we have
	\begin{align*}
		&N^{-n}
		\Big|\E(W_{\ell_1}\dots W_{\ell_n})
		-\E(W_{\ell_1}\dots W_{\ell_{n-1}})\E W_{\ell_n}\Big|
		\\
		&\leq N^{-n}
		\Big|\E(W_{\ell_1}\dots W_{\ell_{n-1}}(W_{\ell_n}-\E W_{\ell_n})\Big|
		\\&
		\leq \var\Big(\frac{W_{\ell_n}}N\Big)^{1/2}\to0.
	\end{align*}
	Hence, the result follows by induction.
\end{proof}



\subsection{Application II: mass gap}

In this section we use the Poincar\'e inequality to prove the existence of mass gap for  lattice Yang--Mills. To this end, 
for $f\in C^\infty_{cyl}(\cQ)$,  recall that  $\Lambda_f$ is the set of edges $f$ depends on. We define
\begin{align*}
	\$f\$_{\infty}\eqdef \sum_{e\in \Lambda_f}\|\nabla_{e}f\|_{L^\infty}.
\end{align*}
In this section it will be convenient for the calculations to consider an explicit choice of an orthonormal basis
of $\mfg$. This choice is standard, see e.g. \cite[Proposition E.15]{AGZ}.
Let $e_{kn}\in M_N$ for $k,n=1,\dots, N$ be the elementary matrices, namely its $(k,n)$-th entry is $1$ and all the other entries are $0$.
For $1\leq k< N$ and $\iota=\sqrt{-1}$, let
\begin{align*}
	D_k=\frac{\iota}{\sqrt{k+k^2}}\Big(-ke_{k+1,k+1}+\sum_{i=1}^ke_{ii}\Big).
\end{align*}
For $1\leq k,n\leq N$, let 
\begin{equ}[e:def-EF]
	E_{kn}=\frac{e_{kn}- e_{nk}}{\sqrt 2},\quad
	F_{kn}=\frac{\iota e_{kn} + \iota e_{nk}}{\sqrt 2}.
\end{equ}
Then:
\begin{itemize}
\item
$\{E_{kn}:1\leq k<n\leq N\}$ is an orthonormal basis of $\so(N)$, and,
\item
$\{D_k:1\leq k< N\}\cup\{E_{kn},F_{kn}:1\leq k<n\leq N\}$ is an orthonormal basis of $\su(N)$.
\end{itemize}
This then determines an orthonormal basis $\{v_e^i\}$ of $\mfg^{E^+_{\Lambda_L}}$, which consists of right-invariant vector fields on $\cQ_L$.

We first prove the following lemma for Lie brackets.

\bl\label{lem:com} It holds that for every $v_e^i$ 
$$
\sum_j |[v_e^i,v_e^j]f|^2 \leq \frac12|\nabla_ef|^2 \quad \mbox{for} \quad G=SO(N),
$$
$$
\sum_j |[v_e^i,v_e^j]f|^2 \leq \frac{9}{2}|\nabla_ef|^2 \quad \mbox{for} \quad G=SU(N).
$$
\el 
\begin{proof}
By direct calculation we have 
$$
e_{ij} e_{mn} = \delta_{jm} e_{in}.
$$
Using this and \eqref{e:def-EF}, we deduce
\begin{equs}
2[E_{kn} , E_{lm}] &= [e_{kn} - e_{nk}, e_{lm}-e_{ml}]
\\
&=\delta_{nl} e_{km} - \delta_{km} e_{ln}
-\delta_{nm} e_{kl} + \delta_{lk} e_{mn}
\\
&\quad  -\delta_{kl} e_{nm}+\delta_{mn} e_{lk}
+\delta_{km} e_{nl}-\delta_{nl} e_{mk}\label{eq:s1}
\\
&=
\delta_{nl} (e_{km}-e_{mk})
+\delta_{km}  (e_{nl}-e_{ln})
+\delta_{nm} (e_{lk}-e_{kl})
+\delta_{lk}  (e_{mn}-e_{nm}).
\end{equs}
 With this calculation, observe that if we fix $(k,n)$ and vary $(l,m)$,
we either get $0$ or one of the orthonormal basis vectors of $\so(N)$
up to a factor $\pm \frac1{\sqrt 2}$,
and in the latter case different values of $(l,m)$ yield different basis vectors.
This implies that  for $G=SO(N)$,  $\{[v_e^i,v_e^j],j=1,\dots,\dg\}$ is a subset of orthonormal basis of $T_{Q_e}G$ up to a factor $\pm \frac1{\sqrt 2}$. Hence, the result holds for $SO(N)$ by definition of $|\nabla_ef|^2$.

The proof for the $SU(N)$ case is similar but requires a bit more calculations.
We have
\begin{equs}
\frac{2}{\iota}[E_{kn} , F_{lm}] &= [e_{kn} - e_{nk}, e_{lm}+e_{ml}]
\\
&=\delta_{nl} e_{km} - \delta_{km} e_{ln}
+\delta_{nm} e_{kl} - \delta_{lk} e_{mn}
\\
&\quad  -\delta_{kl} e_{nm}+\delta_{mn} e_{lk}
-\delta_{km} e_{nl} +\delta_{nl} e_{mk}\label{eq:s2}
\\
&=
\delta_{nl} (e_{km}+e_{mk})
+\delta_{km}  (-e_{nl}-e_{ln})
+\delta_{nm} (e_{lk}+e_{kl})
+\delta_{lk}  (-e_{mn}-e_{nm}).
\end{equs}
When $k=l\neq n=m$
\begin{equs}
\frac{2}{\iota}[E_{kn} , F_{lm}] 
&=
\delta_{nm} (e_{lk}+e_{kl})
+\delta_{lk}  (-e_{mn}-e_{nm})
= 2e_{kk} - 2e_{nn}.\label{eq:s}
\end{equs}
Furthermore
\begin{equs}
-2[F_{kn} , F_{lm}] &= [e_{kn} + e_{nk}, e_{lm} + e_{ml}]
\\
&=\delta_{nl} e_{km} - \delta_{km} e_{ln}
+\delta_{nm} e_{kl} - \delta_{lk} e_{mn}
\\
&\quad  +\delta_{kl} e_{nm}-\delta_{mn} e_{lk}
+\delta_{km} e_{nl}-\delta_{nl} e_{mk}\label{eq:s3}
\\
&=
\delta_{nl} (e_{km}-e_{mk})
+\delta_{km}  (e_{nl}-e_{ln})
+\delta_{nm} (e_{kl}-e_{lk})
+\delta_{lk}  (e_{nm}-e_{mn}).
\end{equs}
 For Lie brackets involving $D$   we  have
\begin{align*}
[E_{kn}, e_{mm}] &= ( \delta_{mn} F_{mk} - \delta_{mk} F_{mn} )/\iota.
\\
 [F_{kn}, e_{mm}] 
&= \iota ( \delta_{mn} E_{km} + \delta_{mk} E_{nm} ).
	\end{align*}
By this we  obtain, for  $k<n$,
\begin{equation}\label{eq:s4}[E_{kn},D_m]=
	\begin{cases}
		0& m+1<k\\
		\frac{m}{\sqrt{m+m^2}}F_{kn}&m+1=k\\
		-\frac1{\sqrt{m+m^2}}F_{kn}&k\leq m<m+1<n\\
		-\frac{m+1}{\sqrt{m+m^2}}F_{kn}& k\leq m<m+1=n\\
		0&k,n\leq m,
	\end{cases}
\end{equation}
and 
\begin{equation}\label{eq:s5}
	[F_{kn},D_m]=
	\begin{cases}
		0& m+1<k\\
		-\frac{m}{\sqrt{m+m^2}}E_{kn}&m+1=k\\
		\frac1{\sqrt{m+m^2}}E_{kn}&k\leq m<m+1<n\\
		\frac{m+1}{\sqrt{m+m^2}}E_{kn}& k\leq m<m+1=n\\
		0&k,n\leq m.
	\end{cases}
\end{equation}

For $v_e^i= E_{kn}Q_e$ or $F_{kn}Q_e$,  we decompose $\{v_e^p,p=1,\dots, \dg\}$ into the following three sets
$$I_1=\{E_{lm}Q_e, F_{lm}Q_e,1\leq l<m\leq N,l\neq k \text{ or } n\neq m\},\quad I_2=\{E_{kn}Q_e,F_{kn}Q_e\},$$
and 
$$I_3=\{D_kQ_e,1\leq k\leq N-1\}.$$
By \eqref{eq:s1}-\eqref{eq:s3} we view $\{[v_e^i,v_e^j], v_e^j\in I_1\}$ as 
a subset of orthonormal basis  of $T_{Q_e}G$ up to a factor $\pm \frac1{\sqrt 2}$. Hence, 
$$
\sum_{v_e^j\in I_1} |[v_e^i,v_e^j]f|^2 \leq \frac12|\nabla_ef|^2.
$$
We further use \eqref{eq:s} to have
\begin{align*}
\sum_{v_e^j\in I_2}|[v_e^i,v_e^j]f|^2\leq |\nabla_e f|^2|e_{kk}-e_{nn}|^2=2|\nabla_e f|^2. 
\end{align*}
We also use \eqref{eq:s4}-\eqref{eq:s5} to have
\begin{align*}
	\sum_{v_e^j\in I_3} |[v_e^i,v_e^j]f|^2
	&\leq \Big(\frac{(k-1)^2}{k-1+(k-1)^2}\1_{k\geq2}+\frac{n^2}{n-1+(n-1)^2}+\sum_{m=k}^{n-2}\frac1{m+m^2}\Big) |\nabla_ef|^2 
	\\
	&=\Big(2-\frac1k+\frac{1}{n-1}+\sum_{m=k}^{n-2}(\frac1m-\frac1{m+1})\Big)|\nabla_ef|^2=  2|\nabla_ef|^2.
\end{align*}
As a consequence, the result holds for $v_e^i=E_{kn}Q_e$ or $F_{kn}Q_e$. 

For $v_e^i=D_mQ_e$ as $[D_l,D_m]=0$ we also use \eqref{eq:s4}-\eqref{eq:s5}  to view $\{[v_e^i,v_e^j], j=1,\dots,\dg\}$ as a subset of orthonormal basis up to a factor with absolute value smaller than $\sqrt2$. We then have 
$$
\sum_j |[v_e^i,v_e^j]f|^2 \le2 |\nabla_ef|^2.
$$
Hence, the result follows. 
\end{proof}

We first prove the following lemma.
We write   $\bar e\sim e$ if $\bar e$ and $e$ appear in the same plaquette;
more precisely, if there exists $p\in \cP$ such that $\{e,e^{-1}\}\cap p \neq \emptyset$ and $\{\bar e,\bar e^{-1}\}\cap p \neq \emptyset$.

\bl\label{lem:zmm} 
Let  $\{v_e^i\}$ be the orthonormal basis given above.
For every $f\in  C^\infty(\cQ_L)$ and every $e\in E^+_{\Lambda_{L}}$,
one has
\begin{align*}
	|[v_e^i,\cL_L]f(Q)|  \leq \sum_{E_{\Lambda_L}^+\ni\bar e\sim e} a_{e,\bar e} |\nabla_{\bar e}f(Q)|,\qquad \forall Q\in \cQ_L,
\end{align*}
with $a_{e,\bar e}= N|\beta| \sqrt{\dg}$ for $e\neq \bar e$ and
$$
a_{e, e}= 2(d-1)N|\beta| (\sqrt{\dg}+\sqrt2N^{1/2}\gamma)\;,
$$
where $\gamma=1$ when $G=SO(N)$ and $\gamma=3\sqrt2$ for $G=SU(N)$.
\el
\begin{proof}
In this proof all the sums over $\bar e$ are restricted to $E_{\Lambda_L}^+$.
	Since the metric on $G$ is bi-invariant and each $v_e^i$ is right-invariant which generates a one-parameter family of isometries,
	$v_e^i$ commutes with the Beltrami-Laplacian $\Delta_e$. So we have
	\begin{align*}
		[v_e^i,\cL_L]f
		&=v_e^i \cL_L f-\cL_L v_e^i f
		\\
		&=\sum_{\bar e\sim e}  \big\<\nabla_{v_e^i} \nabla_{\bar e}\cS,\nabla_{\bar e} f \big\>
		+\big \<\nabla_e\cS \;, \; \nabla_{v_e^i}\nabla_{e}f-\nabla_{e}v_e^if \big\> \;.
	\end{align*}
Writing $\nabla_{\bar e}\cS = \sum_j(v_{\bar e}^j \cS) v_{\bar e}^j$, and  using \eqref{e:F27}, the first term on the RHS is equal to	
$$
\sum_{\bar e\sim e} \Big\<\sum_j (v_e^iv_{\bar e}^j\cS)  v_{\bar e}^j,\nabla_{\bar e} f \Big\>
		+\frac12\sum_j \Big\< (v_e^j\cS) [v_e^i,v_e^j],\nabla_{ e}f \Big\>\;.
$$		
For the second term we use $\nabla_{ e}f = \sum_j(v_{ e}^j f) v_{ e}^j$ and \eqref{e:F27} to write it as
\begin{align*}
&\sum_j\Big\<\nabla_e\cS,(v_e^iv_e^jf)v_e^j+v_e^jf\nabla_{v_e^i}v_e^j-(v_e^jv_e^if)v_e^j\Big\>
\\=&\sum_j (v_e^j\cS) \Big\<[v_e^i,v_e^j],\nabla_{ e}f \Big\>+\frac12 \sum_j v_e^j f \Big\<\nabla_e\cS,[v_e^i,v_e^j] \Big\>\;.
\end{align*}
Therefore,
	\begin{align*}
		[v_e^i,\cL_L]f
		&=\sum_{\bar e\sim e}
		\Big\<\sum_j (v_e^iv_{\bar e}^j\cS)  v_{\bar e}^j,\nabla_{\bar e} f \Big\>
		+\frac32\sum_j (v_e^j\cS) \Big\<[v_e^i,v_e^j],\nabla_{ e}f \Big\>
		\\
		& \qquad +\frac12 \sum_j v_e^j f \Big\<\nabla_e\cS,[v_e^i,v_e^j] \Big\>
		\; \eqdef  \; \sum_{k=1}^3I_k.
	\end{align*}
	
	For $I_1$, by similar calculation as in the proof of Lemma \ref{lem:4.1}, we have $|v_e^iv_{\bar e}^j\cS|\leq N|\beta|$ for $e\neq \bar e$;  also, $|v_e^iv_e^j\cS|\leq 2(d-1)N|\beta|$ since for each edge $e$ there are $2(d-1)$ plaquettes containing $e$ or $e^{-1}$. 
	Combining with H\"older's inequality we have
	\begin{align*}
		|I_1|
		&=
		\Big| \sum_{\bar e\sim e} \sum_j (v_e^iv_{\bar e}^j\cS)  v_{\bar e}^jf \Big|
		\leq \sum_{\bar e\sim e}\Big(\sum_j|v_e^iv_{\bar e}^j\cS|^2\Big)^{1/2}\Big(\sum_j|v_{\bar e}^jf|^2\Big)^{1/2}
		\\
		&\leq N|\beta| \sqrt{\dg} \sum_{\bar e\sim e,\bar e\neq e}|\nabla_{\bar e}f|+2(d-1)N|\beta| \sqrt{\dg} |\nabla_{ e}f|.
	\end{align*}

	For $I_2$ and $I_3$, fixing the edge $e$ we
recall our choice of the orthonormal basis $\{v_e^i\}_{1\le i\le \dg}$ above.
Using Lemma \ref{lem:com} we then have
	\begin{align*}
		|I_2+I_3|&\leq \frac32\Big(\sum_j |v_e^j\cS|^2\Big)^{1/2}\Big(\sum_j |[v_e^i,v_e^j]f|^2\Big)^{1/2}+\frac12\Big(\sum_j |v_e^jf|^2\Big)^{1/2}\Big(\sum_j |[v_e^i,v_e^j]\cS|^2\Big)^{1/2}\\
		 & \leq \sqrt 2 \gamma_1\Big(\sum_j |v_e^j\cS|^2\Big)^{1/2}\Big(\sum_j |v_e^jf|^2\Big)^{1/2}=\sqrt2\gamma_1|\nabla_e\cS||\nabla_ef|
		\\ & \leq 2\sqrt2(d-1)N^{3/2}\gamma|\beta||\nabla_ef|,
	\end{align*}
	where $\gamma_1=1$ for $G=SO(N)$ and $\gamma_1=3$ for $G=SU(N)$ and we use \eqref{e:DS} to bound $|\nabla_e\cS|$ by $2(d-1)N^{3/2}|\beta|\gamma/\gamma_1$ in the last inequality.
	Hence, the result follows.
\end{proof}


The next corollary together with uniqueness in Section~\ref{sec:uniqueYM}
proves Corollary~\ref{co:zmm}. 

\bc Suppose that Assumption \ref{ass1} holds. For $f, g\in C^\infty_{cyl}(\cQ)$, suppose that $\Lambda_f\cap \Lambda_g=\emptyset$.
Then one has
\begin{align*}
	\cov(f,g)\leq c_{1} \dg e^{-c_N d(\Lambda_f,\Lambda_g)}(\$ f \$_{\infty}\$ g \$_{\infty}+\|f\|_{L^2}\|g\|_{L^2}),
\end{align*}
where $c_{1}$ depends on $|\Lambda_f|$, $|\Lambda_g|$,
 and $c_N$ depends on $K_\cS$,  $N$ and $d$.
Here the covariance and $L^2$ are with respect to every tight limit of $\{\mu_{\Lambda_{L}, N, \beta}\}_L$. 
\ec
\begin{proof}
With the calculations and bounds obtained in the previous lemmas, 
together with our Poincar\'e inequality,
to prove exponential decay we can then apply an argument essentially from \cite[Section 8.3]{GZ}.
	We write $\mu_L=\mu_{\Lambda_{L}, N, \beta}$ for simplicity and consider
	\begin{align}\no
		\cov_{\mu_L}(f,g)
		&=\mu_L(fg)-\mu_L(f)\mu_L(g)=\mu_L(P_t^L(fg)) -\mu_L(P_t^Lf)\mu_L(P_t^Lg)
		\\
		&=\mu_L(P_t^L(fg)-P_t^Lf P_t^Lg)+\cov(P_t^Lf,P_t^Lg)\no
		\\
		&\leq\mu_L(P_t^L(fg)-P_t^Lf P_t^Lg)+\var(P_t^Lf)^{1/2}\var(P_t^Lg)^{1/2}.\label{zmm2}
	\end{align}
	Recall that the Poincar\'e inequality is equivalent to the following:
	$\var(P_t^Lf)\leq e^{-2tK_\cS}\|f\|_{L^2(\mu_L)}^2$ (see Remark \ref{re1}).
	Therefore by the Poincar\'e inequality,
	the last term in \eqref{zmm2} is bounded by
	\begin{align}\label{zmm3}
		\var(P_t^Lf)^{1/2}\var(P_t^Lg)^{1/2}\leq	e^{-2tK_\cS}\|f\|_{L^2(\mu_L)}\|g\|_{L^2(\mu_L)}.
	\end{align}
	 
	As $\cL_L$ is uniform elliptic operator with smooth coefficient, by H\"ormander's Theorem (c.f. \cite[Theorem~2.3.3]{MR2200233}) $P_t^Lf\in C^\infty(\cQ_L)$.
	Now we consider $P_t^L(fg)-P_t^Lf P_t^Lg$ in \eqref{zmm2} and we omit $L$ for notation simplicity.
	Recall that $P_t $ and $\cL $ commute on the domain $ D(\cL)$ (see e.g. \cite[Chap. I Exercise 1.9]{MaRockner}).
We have
	\begin{align*}
		P_t(fg)-P_tf P_tg
		&=\int_0^t \frac{\dif}{\dif s}[P_s(P_{t-s }f P_{t-s}g)]\dif s
		\\
		&=\int_0^t \Big [P_s\cL(P_{t-s }f P_{t-s}g)-P_s(\cL P_{t-s}f P_{t-s}g+ P_{t-s}f \cL P_{t-s}g) \Big]\dif s
		\\
		&=2\sum_{e}\int_0^tP_s\<\nabla_eP_{t-s }f, \nabla_eP_{t-s}g\>\dif s
		=2\sum_{e,i}\int_0^tP_s[(v_e^iP_{t-s }f)\cdot (v_e^iP_{t-s}g)]\dif s.
	\end{align*}
Here, to obtain the third line from the second line,  recalling the definition of $\cL$,
by $\nabla(fg)=g\nabla f +f\nabla g$ the first order terms cancel, and it then follows from  $\Delta(fg)=g\Delta f+f\Delta g+2\<\nabla f,\nabla g\>$. 
	Note that for every $e, i $, we have $(P_sv_e^i f)\cdot (P_sv_e^i g)=0$ since $\Lambda_f\cap \Lambda_g=\emptyset$.
	From this we then have
	\begin{align*}
		\sum_{e,i}(v_e^iP_{t-s }f)( v_e^iP_{t-s}g)
		&= \sum_{e,i}(v_e^iP_{t-s }f-P_{t-s}v_e^i f )\cdot( v_e^iP_{t-s}g-P_{t-s}v_e^i g)
		\\
		&+\sum_{e,i}(v_e^iP_{t-s }f-P_{t-s}v_e^i f)\cdot (P_{t-s}v_e^i g)
		\\
		&+\sum_{e,i}(v_e^iP_{t-s }g-P_{t-s}v_e^i g)\cdot(P_{t-s}v_e^i f)
		 \eqdef \sum_{e}(I_e^1+I^2_e+I^3_e).
	\end{align*}
	
	Suppose for the moment that we can prove the following:  for any $c>0$ and $f\in C^\infty_{cyl}(\cQ_L)$, there exists $B>0$ such that for $d(e,\Lambda_f)\geq Bt$  one has
	\begin{align}\label{eq:com}
		\sum_i\|v_e^iP_{t }f-P_{t}v_e^i f\|_{L^\infty}\leq \dg e^{-2cd(e,\Lambda_f)}\$ f \$_\infty.
	\end{align}
	We choose $t\sim d(\Lambda_f,\Lambda_g)/B$ below. Applying \eqref{eq:com}  to the function $g$
	with $e\in \Lambda_f$ (in which case $I_e^2=0$ since $v_e^i g=0$) and using \eqref{eq:nap}
	\begin{align*}
		\|I_e^1+I_e^3\|_{L^\infty}
		&\leq \sum_i\|v_e^iP_{t-s}f\|_{L^\infty}\|v_e^iP_{t-s}g-P_{t-s}v_e^ig\|_{L^\infty}
		\\
		&\leq \dg e^{-2cd(\Lambda_f,\Lambda_g)} \$ f \$_{\infty}\$ g \$_\infty.
	\end{align*}
	Similarly for $e\in \Lambda_g$, $I_e^3=0$ and
	\begin{align*}
		\|I_e^1+I_e^2\|_{L^\infty}\leq  \dg e^{-2cd(\Lambda_g,\Lambda_f)} \$ g \$_{\infty}\$ f \$_{\infty}.
	\end{align*}
	For $e\notin \Lambda_f\cup \Lambda_g$ we have $I_e^2=I_e^3=0$ and
	$d(e,\Lambda_f)\geq d(\Lambda_g,\Lambda_f)/2$ or $d(e,\Lambda_g)\geq d(\Lambda_g,\Lambda_f)/2$. For both cases we have
	\begin{align*}
		\|I_e^1\|_{L^\infty}
		\leq \dg e^{-cd(\Lambda_f,\Lambda_g)-c(d(e,\Lambda_f)\wedge d(e,\Lambda_g))} \$ f \$_{\infty}\$ g \$_{\infty}.
	\end{align*}
	
	With these bounds on $I_e^{1},I_e^{2},I_e^{3}$, we sum over $e$ and obtain that for $d(\Lambda_f,\Lambda_g)\geq Bt$
	\begin{align}\label{zmm4}
		\|	P_t(fg)-P_tf P_tg\|_{L^\infty}\leq c_1 \dg e^{-cd(\Lambda_f,\Lambda_g)}\$ f \$_{\infty}\$ g \$_{\infty}.
	\end{align}
	Substituting \eqref{zmm3} and \eqref{zmm4} into \eqref{zmm2} we get
	\begin{align*}
		\cov_{\mu_L}(f,g)
		\leq c_1 \dg e^{-cd(\Lambda_f,\Lambda_g)}\$ f \$_{\infty}\$ g \$_{\infty}+	e^{-2tK_\cS}\|f\|_{L^2}\|g\|_{L^2},
	\end{align*}
where $c_1$ depends on $|\Lambda_f|$ and $|\Lambda_g|$ and is independent of $L$.
	Since  $t\sim d(\Lambda_f,\Lambda_g)/B$, letting $L\to\infty$ the result follows.
	
	It remains to check the claimed bound \eqref{eq:com}. We use a similar argument as in \cite[Theorem 8.2]{GZ} which we adapt into our setting.  We have
	\begin{align}\label{zmm1}
		v_e^iP_tf-P_tv_e^if=\int_0^t\frac{\dif}{\dif s} \left( P_{t-s}v_e^iP_sf\right)\dif s=\int_0^tP_{t-s}[v_e^i,\cL]P_sf \dif s.
	\end{align}
	By Lemma~\ref{lem:zmm}, we have
	\begin{align*}
		\|[v_e^i,\cL]P_sf\|_{L^\infty}\leq \sum_{\bar e\sim e} a_{e,\bar e} \|\nabla_{\bar e} P_sf\|_{L^\infty},
	\end{align*}
	 for  constants $a_{e,\bar e}$ which are uniformly bounded in $e,\bar e$.  Hence, by \eqref{zmm1}
	\begin{align*}
		\sum_i\|v_e^i P_tf\|_{L^\infty}\leq \sum_i\|v_e^i f\|_{L^\infty}+\int_0^t\sum_{\bar e} D_{e,\bar e} \|\nabla_{\bar e} P_sf\|_{L^\infty}\dif s,
	\end{align*}
	with a matrix $D$ such that $D_{e,\bar e}=\dg a_{e,\bar e}$ if $e\sim \bar e$ and $D_{e,\bar e}=0$ for other case.
	Since $e\notin \Lambda_f$ we get $v_e^i f=0$ and by iteration
	\begin{align*}
		\sum_i\|v_e^i P_tf\|_{L^\infty}\leq\sum_{n=N_e}^\infty \frac{t^n}{n!}\sum_{\bar e}D_{e,\bar e}^{(n)} \sum_i\|v_{\bar e}^i f\|_{L^\infty},
	\end{align*}
	with $N_e=d(e,\Lambda_f)$ and $D_{e,\bar e}^{(n)}\leq C^n_0$ with $C_0=\dg (a_{e,e}+6(d-1)a_{e,\bar e})$. As a result, using $n!\geq e^{n\log n-2n}$, for $2-\log B+\log C_0+\frac{C_0}{B}\leq -2c$ and $d(e,\Lambda_f)\geq Bt$ we have
	\begin{align*}
	\sum_i\|v_e^i P_tf\|_{L^\infty}
	\leq\sum_{n=N_e}^\infty \frac{t^n}{n!}C_0^n \dg \$ f \$_{\infty}
	\leq \frac{(C_0t)^{N_e}}{N_e!}e^{tC_0} \dg \$ f \$_{\infty}
	\leq \dg e^{-2c d(e,\Lambda_f)}\$ f \$_{\infty}.
	\end{align*}
	Hence, \eqref{eq:com} follows.
\end{proof}

\br
From the above proof one can see 
$$
c_N\sim \frac{K_\cS}{\dg (a_{e,e}+6(d-1)a_{e,\bar e})},
$$ but this is not necessarily optimal.
\er

\section{Uniqueness of invariant measure}
\label{sec:uniqueYM}

In this section we prove Theorem \ref{th:1.3}.
As the results \eqref{eq:er} and \eqref{eq:er1}   in Theorem \ref{th:4.2} depend on $\rho_L$,
we cannot simply send $L\to\infty$ to conclude the result for $(P_t)_{t\geq0}$ on $\cQ$.  The idea of our proof is to construct a suitable coupling and find a suitable distance $\rho_{\infty,a}$ such that
for any $\mu,\nu\in \sP(\cQ_L)$, the Wasserstein distance w.r.t. $\rho_{\infty, a}$  between  $\mu P_t^L$ and $\nu P_t^L$ decays exponentially fast in time. Recall that $\rho_{\infty, a}$ is given in \eqref{e:rho-inf}  and we will choose a suitable parameter $a>1$ below.

We denote $C_{\Ric,N}= \frac{\alpha(N+2)}4-1$ which is a constant arising from  Ricci curvature in \eqref{e:Ricvv},
where  $\alpha=1, 2$ for $SO(N)$ and $SU(N)$ 
respectively.
For any $\mu, \nu\in \sP(\cQ)$, we introduce the Wasserstein distance
\begin{align*}
	W_p^{\rho_{\infty,a}}(\mu,\nu)\eqdef \inf_{\pi\in \sC(\mu,\nu)}\pi(\rho_{\infty,a}^p)^{1/p}.
\end{align*}

Recall that the generator $\cL_L$ is given by
	\begin{align}
	\cL_L F=	\sum_{e\in E_{\Lambda_{L}}^+}\Delta_{e}F+\sum_{e\in E_{\Lambda_{L}}^+}\<\nabla \cS(Q)_e,\nabla_{e} F\>.
\end{align}
For fixed $Q\in \cQ_L$ define
\begin{equ}[e:CD]
	C\eqdef\{(Q,Q'):Q'\in \text{cut}(Q)\}, \quad D\eqdef \{(Q,Q):Q\in \cQ_L\},
\end{equ}
where $\text{cut}(Q)$ consists of conjugate points of $Q$ and points having more than one minimal geodesics to $Q$.

In the following we prove the result for any $a>1$.

\bl\label{lem:4.6}
Suppose that $\widetilde K_\cS\eqdef C_{\Ric,N}-{(4+4\sqrt{a})}N|\beta|(d-1)>0$.
Then for every $L\in \mZ$,
\begin{align*}
	W_2^{\rho_{\infty,a}}(\mu P_t^L,\nu P_t^L)\leq e^{-\widetilde K_\cS t}W_2^{\rho_{\infty,a}}(\mu,\nu),\qquad t\geq0, \quad\mu, \nu\in \sP(\cQ_L).
\end{align*}
Here we  use periodic extension to view every measure as a probability on $\cQ$.
\el
\begin{proof}
To prove the statement we will construct a suitable coupling $(Q(t),Q'(t))_{t\geq0}$ between the two Markov processes associated to the generator $\cL_L$ starting from two different points $(Q,Q')$.
We will then use It\^o's fomula to calculate $\dif \rho_{\infty,a}^2(Q(t),Q'(t))$ and obtain
	\begin{align}\label{eq:rhoi}
		\rho_{\infty,a}^2(Q(t),Q'(t))\leq e^{-2\tilde{K}_St}\rho_{\infty,a}^2(Q(0),Q'(0)),\quad t\geq0.
	\end{align}
	Suppose that \eqref{eq:rhoi} holds and we use $\mP_t^{Q,Q'}$ to denote
	the distribution of the coupling $(Q(t),Q'(t))$. Then for any $\mu,\nu\in \sP(\cQ_L)$ and $\pi\in \sC(\mu,\nu)$ we set
	\begin{align*}
		\pi_t\eqdef \int \mP_t^{Q,Q'}\pi(\dif Q,\dif Q')\in \sC (\mu P_t^L,\nu P_t^L).
	\end{align*}
	Hence, for $t\geq0$
	\begin{align*}
		W_2^{\rho_{\infty,a}}(\mu P_t,\nu P_t)^2\leq \int\rho_{\infty,a}^2\dif \pi_t \leq e^{-2\widetilde{K}_{\cS} t}\pi(\rho_{\infty,a}^2),
	\end{align*}
	and the result follows.
In the following we prove \eqref{eq:rhoi} in three steps.

	\newcounter{MM} 
\refstepcounter{MM} 

\medskip

{\sc Step} \arabic{MM}.\label{MM1}\refstepcounter{MM} Construction of coupling $(Q(t),Q'(t))_{t\geq 0}$ and calculation of $\dif \rho^2(Q_e(t),Q'_e(t))$.

 The usual coupling for Brownian motions and diffusions  on  Riemannian manifolds
  is the Kendall--Cranston's coupling (c.f. \cite{Ken86}). In our case
  we adapt a construction in  \cite[Proposition~2.5.1]{Wang} to cancel the noise part,
 with one of the key modifications  due to our new weighted distance on our product manifold.

 More precisely,
	let $(Q(t),Q'(t))$ be the coupling on $\cQ_L\times \cQ_L$ starting from $(Q,Q')$ given by  the  following generator
	\begin{equ}[e:Lc]
		\cL^c
		=\sum_{e\in E_{\Lambda_{L}}^+}\Delta_{Q_e}
		+\sum_{e\in E_{\Lambda_{L}}^+}\Delta_{Q_e'}
		+2\sum_{i,j=1}^{\dim \q_L}\Big\<P_{Q,Q'}v_i,v'_j \Big\>_{T_{Q'}\cQ_L}v_iv'_j
		+\nabla\cS(Q)+ \nabla\cS(Q'),
	\end{equ}
where $\Delta_{Q_e}f(Q,Q')=(\Delta_e f(\cdot,Q'))(Q)$, $\Delta_{Q_e'}f(Q,Q')=(\Delta_e f(Q,\cdot))(Q')$ and  
$\{v_i\}, \{v_j'\}$ are orthonormal bases of tangent spaces at $Q$ and $Q'$,
 and $P_{Q,Q'}:T_Q\cQ_L\to T_{Q'}\cQ_L$ is the parallel translation along the geodesic from $Q$ to $Q'$.
 It is easy to see that $\cL^c$ is independent of the choices of the basis $\{v_i\}, \{v_j'\}$. In fact, to construct such coupling we need to avoid the cut locus $C$ and the diagonal set $D$ by suitable cut-off approximation and we refer to Appendix~\ref{sec:coupling} and \cite[Section 2.1]{Wang} for more details on the construction.

We intend to apply It\^o's formula to $\rho_e^2$ with $\rho_e\eqdef \rho(Q_e,Q_e')$.
To this end, we consider the projection map $\pi_e: \cQ_L \to G$ defined by $\pi_e Q\eqdef Q_e$.
We then
write $\hat\rho_e( \;\cdot \;  ; Q_e')$ for the pull-back of the function
$\rho(\cdot,Q_e')$ via the map $\pi_e$.
Namely, fixing any $ Q_e' \in G$, the function
$\hat\rho_e( \;\cdot \;  ; Q_e')$ is a function on $\cQ_L$ defined by
$$
	\hat\rho_e(Q ; Q_e')
	\eqdef \rho(\pi_e Q , Q_e') = \rho(Q_e , Q_e')
	\qquad
	\mbox{for } Q\in \cQ_L.
$$
Similarly we  define  function $\hat\rho_e(  Q_e ; \;\cdot \;  )$ on $\cQ_L$ as
$$
	\hat\rho_e(Q_e ; Q')
	\eqdef \rho(Q_e , \pi_e Q') = \rho(Q_e , Q_e')
	\qquad
	\mbox{for } Q'\in \cQ_L.
$$	
We can also write $\rho_e=\rho(\pi_eQ,\pi_eQ')$ and view $\rho_e$ as a function on $\cQ_L\times \cQ_L$.
	
For $R\in \mN$, we choose a smooth cut-off function $\chi_R:[0,\infty)\to [0,\infty)$ satisfying $\chi_R(x)=x$ for $x\geq 1/R$ and $\chi_R|_{[0,\frac1{2R}]}=0$ and $\chi_R'\geq0$. 

Since $\rho_e^2$ is smooth near the diagonal,	we claim that by It\^o's formula (see \cite[Section 2.1]{Wang}, \cite[Section 6.5]{Hsu}),
and writing $\rho_e(t)=\rho(Q_e(t),Q'_e(t))$,
 we have
	\begin{align}\label{eq:Ito}
		\dif \chi_R(\rho^2_e(t))\leq 2\chi_R'(\rho_e^2(t)) \rho_e(t)J(Q(t),Q'(t))\dif t
	\end{align}
for  $t<T \eqdef \inf\{t\geq 0,Q(t)=Q'(t)\}$ 
where
$J $ is a continuous function on $\cQ_L\times \cQ_L$
such that $J\geq I_{\cS}$ on $(D\cup C)^c$.
 Here
\begin{align}\label{e:IS}
	I_{\cS}(Q,Q') \eqdef I(Q_e,Q_e')
	+\Big((\nabla \cS)\hat\rho_e(\; \cdot \; ; Q_e')\Big)(Q)
	+\Big((\nabla \cS)\hat\rho_e(Q_e; \; \cdot\;)\Big)(Q')\;,
\end{align}
and $I(x,y)$ is the index along  $\gamma:[0,\rho(x,y)]\to G$ which is the minimal geodesic from $x$ to $y$ in $G$:
\begin{align*}
	I(x,y)\eqdef\sum_{i=1}^{\rm{dim}\mfg-1}\int_0^{\rho(x,y)}\Big(|\nabla_{\dot{\gamma}}J_i|^2-\<\sR(J_i,\dot \gamma)\dot\gamma,J_i\>\Big)_s\dif s,
\end{align*}
where $\{J_i\}_{i=1}^{\rm{dim}\mfg-1}$ are Jacobi fields along $\gamma$ such that at $x$ and $y$, they, together with $\dot\gamma$, form an orthonormal basis.
Note that
the reason to derive a bound in terms of  $J$ instead of $I_\cS$ in \eqref{eq:Ito}
is that
$J$   
is defined everywhere on  $\cQ_L\times \cQ_L$  whereas $I_\cS$ is not well-defined on $C\cup D$.
 In Step \ref{MM2} below we control $I_\cS$ by a continuous function on $\cQ_L\times \cQ_L$, which can also control $J$.

The rigorous derivation of \eqref{eq:Ito} follows by cut-off approximation  to avoid the cut locus $C$ and the diagonal set $D$ (c.f. \cite[Theorem 2.1.1]{Wang}, \cite[Theorem 6.6.2]{Hsu}).
In the following we give the idea on how the terms in \eqref{e:IS} arise and we put more details of the construction and derivation of \eqref{eq:Ito} in Appendix~\ref{sec:coupling}.

 For $t<T$ and $(Q(t), Q'(t))\notin C\cup D$,  on the support of $\chi_R(\rho_e^2)$,
 $I_\cS(Q,Q')$ is given by    $\cL^c \rho_e$. 
To prove this,
since $\rho_e= \rho(\pi_eQ,\pi_eQ')=\rho(Q_e,Q_e')$ only depends on $Q_e, Q_e'$
(i.e. independent of the values of $Q,Q'$ on the other edges),
we can write the first three terms in  $\cL^c \rho_e$  
as (see the RHS of \eqref{e:Lc})
 \begin{align}\label{eq:d1}
 \Delta_{Q_e}\rho(Q_e,Q_e')
 +\Delta_{Q_e'}\rho(Q_e,Q_e')
 +2\sum_{i,j=1}^{\dg}
    \Big\< P_{Q_e,Q'_e}v_{e,i} \; ,\; v'_{e,j}\Big\>_{T_{Q'_e}G}
     v_{e,i}v'_{e,j} \, \rho(Q_e,Q_e'),
 \end{align}
with $\{v_{e,i}\}$ and $\{v_{e,j}'\}$ being an orthonormal basis
of the tangent space at $Q_e, Q_e'$ and  $P_{Q_e,Q'_e}:T_{Q_e}G\to T_{Q'_e}G$ being the parallel translation along the geodesic from $Q_e$ to $Q'_e$.  Here we used the fact that
$$
P_{Q_e,Q'_e}  (v_e)   
=
 (P_{Q,Q'}  v)_e
\qquad
\forall v\in T_{Q} \cQ_L
$$
(in particular the $e$ component of the geodesic from $Q$ to $Q'$ is the geodesic from $Q_e$ to $Q_e'$.)
 By the second variational formula  (c.f. \cite[p21-22]{CE75}, \cite[Theorem~2]{Ken86}, \cite[Lemma 6.6.1]{Hsu}) we know that  \eqref{eq:d1} is equal to $I(Q_e,Q_e')$.

Moreover,
the last two terms involving $\nabla \cS$ in $\cL^c$ give rise to the last two terms in \eqref{e:IS}.

The quadratic variation of the martingale part from applying It\^o's formula  to $\rho_e$ is
\begin{align}\label{eq:gamma}
	|\<\dot\gamma,\dot \gamma\>(Q_e')-\<\dot\gamma,\dot \gamma\>(Q_e)|^2+\sum_{i=1}^{\dg-1}|\<J_i,\dot \gamma\>(Q_e')-\<J_i,\dot \gamma\>(Q_e)|^2
\end{align}
by the first variation formula (c.f. \cite[p5]{CE75}, \cite[Section 6.6]{Hsu}).
Since $\{J_i\}_{i=1}^{\rm{dim}\mfg-1}$  together with $\dot\gamma$
form an orthonormal basis, each term in \eqref{eq:gamma} is zero,
which implies that the martingale part is zero. We also refer to the derivation of \eqref{eq:va} in Appendix~\ref{sec:coupling} for more details on the calculation of the quadratic variation.

	{\sc Step} \arabic{MM}.\label{MM2}\refstepcounter{MM} Estimate the RHS of  \eqref{e:IS}.

In this step we estimate the RHS of  \eqref{e:IS} and prove that for $t<T$
\begin{align}\label{b:rhoe}
	\p_t \rho_e^2
	\leq
	-2C_{\Ric,N} \rho_e^2
	+2N|\beta| \sum_{p,p\succ e}\rho_e
	\Big(\rho_e+\sum_{e\neq \bar e\in p}\rho_{\bar e}\Big).
	\end{align}

	By the index lemma  (see \cite[Theorem 2.1.4]{Wang} or \cite[Lemma 6.7.1]{Hsu}),
 for $x=Q_e$, $y=Q_e'$ with $\gamma:[0,\rho_e]\to G$ the minimal geodesic  from $Q_e$ to $Q_e'$, where we recall
 that $\rho_e = \rho(Q_e,Q_e')$,  we have
	\begin{align}\label{eq:zb1}
		I(Q_e,Q_e')\leq -\int_0^{\rho_e}\Ric(\dot \gamma,\dot \gamma)\dif s=-C_{\Ric,N}\rho_e.
	\end{align}
	
In the following we consider the last two terms in \eqref{e:IS}.
Given $Q,Q'\in \cQ_L$ as above, we define a path $\Gamma: [0,\rho_e]\to \cQ_L$
which goes from $Q$ to $Q'$
as follows.
For any ${\bar e}\in E^+_{\Lambda_{L}}$, we can find a geodesic $\gamma^{\bar e} : [0,\rho_{\bar e}] \to G_{\bar e}$
from $Q_{\bar e} $ to $Q_{\bar e}'$. Here $\rho_{\bar e}$ is the length of the geodesic.
 We then set
\begin{align*}
	\Gamma(s)  =  \Big(\tilde\gamma^{\bar e}(s)\Big)_{\bar e\in E^+_{\Lambda_{L}}} \in \cQ_L
	\qquad  (s\in [0,\rho_e])
	\qquad
	\mbox{where}
	\qquad
	\tilde\gamma^{\bar e} (s) = \gamma^{\bar e} (\rho_{\bar e} s / \rho_e).
\end{align*}
We can check that we indeed have
$\Gamma(0) = Q$ and $\Gamma(\rho) = Q'$ and $\Gamma$ is the geodesic from $Q$ to $Q'$. Also, we have
$$
\pi_e (\Gamma(s)) = \gamma(s)\qquad  (\forall s\in [0,\rho_e]).
$$
With the above notation at hand, we write the last two terms in \eqref{e:IS} as
\begin{align}\label{dsds}
\Big((\nabla \cS)\hat\rho_e(\; \cdot \; ; Q_e')\Big)(Q)
	+\Big((\nabla \cS)\hat\rho_e(Q_e; \; \cdot\;)\Big)(Q')
	=
	\<\nabla \cS,\dot \gamma\>(Q')-\<\nabla \cS,\dot\gamma\>(Q),
\end{align}
with $\dot \gamma$ extended as a tangent vector  field on  $\cQ_L$ along the curve $\Gamma$,
which is still denoted by $\dot \gamma$, by setting all the other components as zero.

We then write  \eqref{dsds} as
\begin{align*}
	\int_0^{\rho_e}\Big(\frac{\dif}{\dif s} \Big\<\nabla \cS(\Gamma(s)),\dot \gamma(s)\Big\>\Big)\dif s
	&=\int_0^{\rho_e} \Big( \dot\Gamma\<\nabla \cS,\dot \gamma\> \Big) (\Gamma(s))\dif s.
\end{align*}

Hence, we get
$$
\eqref{dsds} = \int_0^{\rho_e}   \Big( \dot\Gamma\<\nabla \cS,\dot \gamma\> \Big) (\Gamma(s))\dif s.
$$
 Below we estimate the above integral. With a slight abuse of notation,
for an edge $e\in E^+_{\Lambda_L}$ we write $e\in p$ if $\{e,e^{-1}\}\cap p \neq \emptyset$, namely we view edges as undirected in the calculation below.
We also extend $\dot{\tilde\gamma}^{\bar e}, \bar e\in E^+_{\Lambda_L}$  as tangent vector field on $\cQ_L$ along $\Gamma$, which is still denoted by $\dot{\tilde\gamma}^{\bar e}$, by setting all the  components other than ${\bar e}$ to be zero.
Then recalling our formula for $\cS$ we have
\begin{align}
	\int_0^{\rho_e}  \Big( \dot\Gamma\<\nabla \cS,\dot \gamma\> \Big) (\Gamma(s))\dif s\no
	&=N\beta\sum_{p\succ e}\sum_{\bar e\in p}
	\int_0^{\rho_e}\dot{\tilde{\gamma}}^{\bar e}(\dot\gamma \Re\tr(Q_p)) \dif s\no
	\\
	&\leq N|\beta|\sum_{p\succ e}\sum_{\bar e\in p}\int_0^{\rho_e}|\dot{\tilde{\gamma}}^{\bar e}||\dot\gamma| \, \dif s\no
	\\
	&\leq N|\beta| \sum_{p\succ e}
	\Big(\rho_e+ \sum_{e\neq\bar e\in p}\rho_{\bar e}\Big),\label{eq:za1}
\end{align}
where we used $|\dot \gamma|=1$ and $|\dot{\tilde{\gamma}}^{\bar e}|=\rho_{\bar e}/\rho_e$.
Here we  calculate
$\dot{\tilde{\gamma}}^{\bar e}(\dot\gamma \Re\tr(Q_p))$
as follows: for $Q_p=Q_eQ_{\bar e}Q_1Q_2$ with $Q_1, Q_2\in G$ we get
\begin{align*}
\dot{\tilde{\gamma}}^{\bar e}
(\dot\gamma \Re\tr(Q_p))
&=\frac{\dif}{\dif t}\Big|_{t=0}\frac{\dif}{\dif s}\Big|_{s=0}
\Re\tr\Big(\gamma(s) {\tilde{\gamma}}^{\bar e}(t) Q_1Q_2\Big)
\\
&=\Re\tr\Big(\dot\gamma \dot{\tilde{\gamma}}^{\bar e}Q_1Q_2\Big),
\end{align*}
the absolute value of which by H\"older's inequality for trace is bounded by $\, |\dot{\tilde{\gamma}}^{\bar e}| \, |\dot\gamma|$. Similar calculation holds for $Q_p=Q_eQ_1Q_{\bar e}Q_2$ and $Q_p=Q_eQ_1Q_2Q_{\bar e}$ and we use similar argument as in the proof of Lemma \ref{lem:4.1} to control $\dot \gamma\dot \gamma \Re\tr(Q_p)$ by $|\dot \gamma|^2$.
Hence, by  \eqref{eq:Ito}, \eqref{e:IS}, \eqref{eq:zb1}, \eqref{eq:za1}, we get
\begin{align*}
	\p_t\chi_R(\rho_e^2)
	\leq -2C_{\Ric, N}\, \chi_R'(\rho_e^2)\,\rho_e^2
	+2\chi_R'(\rho_e^2) N|\beta| \sum_{p,p\succ e}\Big(\rho_e^2+\sum_{e\neq\bar e\in p}\rho_{\bar e}\rho_e\Big).
\end{align*}
Letting $R\to \infty$ and by dominated convergence theorem and the fact that $\chi_R'$ is uniformly bounded in $R$,
\eqref{b:rhoe} holds.


\vspace{1ex}

	{\sc Step} \arabic{MM}.\label{MM3}\refstepcounter{MM} Derivation of \eqref{eq:rhoi}.
	
We extend $(Q(t),Q'(t))_{t\geq0}$ periodically
as a process on $\cQ\times \cQ$, which is still denoted as $(Q(t),Q'(t))_{t\geq0}$. \eqref{b:rhoe} also holds for the extension.
By \eqref{b:rhoe} we have
\begin{equ}
	\p_t \rho_e^2
	\leq
	-2C_{\Ric,N}\, \rho_e^2
	+2N|\beta| \Big(2(d-1)\rho_e^2+\sum_{p,p\succ e}\sum_{e\neq\bar e\in p}\rho_{\bar e}\rho_e\Big).      
\end{equ}
In the following we bound $\rho_{\bar e}\rho_e$. To obtain the desired rate given by $\widetilde{K}_\cS t$, we  need to control $\rho_{\bar e}\rho_e$ in different ways  depending on the relations
 between $|e|$ and $|\bar e|$. We first fix a plaquette $p$ and consider two edges $\bar e\neq e$. 


\noindent For the edges satisfying $|e|= |\bar e|$ \footnote{If two edges $e\neq \bar e$ share the same vertex and this vertex is  closer to origin,  we may have $|e|= |\bar e|$.} we have
\begin{align*}
	2\rho_{\bar e}\rho_e\leq \rho_{\bar e}^2+\rho_e^2.
\end{align*}
For the edges satisfying $|e|\neq |\bar e|$  we have
\begin{align*}
	\frac2{\sqrt{a}}\rho_{\bar e}\rho_e\leq \frac1a\rho_{\bar e}^2+\rho_e^2.
\end{align*}
The reason for the choice of the above weight is as follows: there is one plaquette $p$ such that only one edge $\bar e\neq e$ in  $p$ with the same distance as $|e|$ and other edges  with the distance larger than $|e|$.
Thus, since for each edge $e$ there are
$2(d -1)$ plaquettes in $\cP$ such that $p\succ e$, we get
\begin{align*}
&2N|\beta|\sum_{p,p\succ e}\sum_{e\neq\bar e\in p}\rho_{\bar e}\rho_e
\\
&\leq \sqrt{a}N|\beta|  \sum_{p,p\succ e}  \sum_{|e|\neq|\bar e|\in p}\Big(\frac1a\rho_{\bar e}^2+\rho^2_e\Big)
+N|\beta|\sum_{p,p\succ e}\sum_{|e|=|\bar e|\in p,e\neq\bar e}
\Big(\rho_{\bar e}^2+\rho^2_e\Big)
\\
&=\sqrt{a}N|\beta|\sum_{p,p\succ e}\sum_{|e|\neq|\bar e|\in p}\frac1a\rho_{\bar e}^2+(4\sqrt{a}+2)(d-1)N|\beta|\rho^2_e+N|\beta|\sum_{p,p\succ e}\sum_{|e|=|\bar e|\in p,e\neq\bar e}\rho_{\bar e}^2
\end{align*}
where the first sum for $\bar e$ with $|e|\neq|\bar e|$ includes two edges and the second sum for $\bar e$ with $|e|=|\bar e|$ contains only one edge.
Note that we also get an extra $\frac1a$ before $\rho_{\bar e}^2$ with $|\bar e|=|e|+1$, which can be put into the weight $\frac1{a^{|\bar e|}}$. Substituting the above calculation into \eqref{b:rhoe} and using again the fact that for each edge $e$ there are
$2(d -1)$ plaquettes in $\cP$ such that $p\succ e$  we get
\begin{align*}
	\frac1{a^{|e|}}\p_t\rho_e^2\leq& -2C_{\Ric,N} \frac1{a^{|e|}}\rho_e^2+(4\sqrt{a}+6)N|\beta|(d-1)\frac1{a^{|e|}}\rho_e^2
	\\
	&+N|\beta|\sum_{p,p\succ e}
	\Big(\sqrt{a}\sum_{|e|\neq|\bar e|\in p}\frac1{a^{|\bar e|}}\rho_{\bar e}^2+\sum_{|e|=|\bar e|\in p,e\neq\bar e}\rho_{\bar e}^2\Big).
\end{align*}
Taking sum over $e$ we notice that $\rho_e^2$ also appears when calculating $\frac1{a^{|\bar e|}}\p_t\rho_{\bar e}^2$ with $\bar e$ and $e$ in the same plattque, which at most gives $2\sqrt aN|\beta| \frac1{a^{|e|}}\rho_e^2$ and $N|\beta| \frac1{a^{|e|}}\rho_e^2$ from $\frac1{a^{|\bar e|}}\p_t\rho_{\bar e}^2$ with $|\bar e|\neq|e|$ and  $|\bar e|=|e|$, respectively. Since for each edge $e$ there are
$2(d -1)$ plaquettes in $\cP$ such that $p\succ e$, we get
\begin{equ}[e:gronwa]
	\sum_{e\in E^+}\frac1{a^{|e|}}\p_t\rho_e^2\leq -2C_{\Ric,N} \sum_{e\in E^+}\frac1{a^{|e|}}\rho_e^2+(8+8\sqrt{a})N|\beta|(d-1)\sum_{e\in E^+}\frac1{a^{|e|}}\rho_e^2.
\end{equ}
Hence, \eqref{eq:rhoi} follows from Gronwall's lemma.
\end{proof}


Now we prove Theorem \ref{th:1.3}. 
One of the important ingredients 
in the proof 
is that under Assumption \ref{ass1},
 the condition of Lemma~\ref{lem:4.6}
can {\it indeed} be satisfied
by tuning the weight parameter $a>1$ to be sufficiently close to $1$,
see Eq.~\eqref{e:tildeK} below.
The crucial reason for this proof to work
is that the last term in our bound \eqref{e:gronwa}
is of order $N$,
rather than $N^p$ for some $p>1$.
This is a nontrivial point:
indeed, that term comes from bounding the $\nabla \cS$ terms
on the right-hand side of  \eqref{e:IS},
but note that 
 $\cS$ defined in \eqref{e:defS} would appear to 
be of order $N^2$ if one naively bound $\tr(Q_p) \le N$, in which case the proof would break down.
In fact in the previous proof we instead apply the property of the Lie group $G$ and H\"older inequality to separate different vector fields appearing in the second order derivative of $\cS$, which could finally be bounded by sum of Riemannian distances up to a factor $N|\beta|$.

\begin{proof}[Proof of Theorem \ref{th:1.3}]
For any two invariant measures $\mu, \nu$ of \eqref{eq:YM in}, 
we can find two sequences $\{\mu_L\}, \{\nu_L\}\subset \sP(\cQ_L)$
 such that their periodic extensions over the entire $\cQ$, which are still denoted by $\mu_L,\nu_L$, converge to $\mu, \nu$ weakly in $\cQ$, 
 with the distance induced by $\|\cdot\|$ defined in \eqref{eq:norm}.
 Indeed, let $Q(0):\Omega\to \cQ$ be a random variable
 such that $\mbox{Law}(Q(0))=\mu$, and then define
$\mu_L \eqdef \mbox{Law}(Q^L(0))$ where $Q^L(0):\Omega\to \cQ_L$ is given by
 \begin{equ}
 	Q_e^L(0)=
 	\begin{cases}
 		\displaystyle
 		Q_e(0)&\qquad
 		e\in E^+_{\Lambda_{L-1}} 
 		\\
 		\displaystyle I_N\;,
 		&\qquad
 		e\in E^+_{\Lambda_L}\backslash E^+_{\Lambda_{L-1}}\;,
 	\end{cases}
 \end{equ}
then $\{\mu_L\}$ satisfy the desired property. 
The sequence $\{\nu_L\}$ can be constructed in the same way.

By Lemma \ref{lem:exist} we obtain  the unique solutions $Q^L\in C([0,\infty);\cQ_L)$ to \eqref{eq:YM}  starting from the initial distribution $\mu_L\in \sP(\cQ_L)$. 
By periodic extension we view $Q^L\in C([0,\infty);\cQ)$.  Recall that $(P_t^L)_{t\geq0}$ is the Markov semigroup associated with the solution to \eqref{eq:YM}. By global well-posedness of \eqref{eq:YM}, 
we obtain for $F\in C^\infty_{cyl}(\cQ)$ and $t\geq0$
 $$\int P_t^LF\dif \mu_L=\E F(Q^L(t)).$$
Similarly, using Lemma \ref{lem:4.7} we obtain unique solutions $Q\in C([0,\infty);\cQ)$ to \eqref{eq:YM in} starting from the initial distribution $\mu$.
Recall that $(P_t)_{t\geq 0}$ is the Markov semigroup for the Markov process associated to \eqref{eq:YM in}. By uniqueness in law of the solution to \eqref{eq:YM in} we have
$$\int P_tF\dif \mu=\E F(Q(t)).$$

As $\mu_L$ converges to $\mu$ weakly in $\cQ$,  by the same argument as in the proof Proposition~\ref{lem:4.7}, the law of $\{Q^L\}$ is tight in $C([0,\infty);\cQ)$ and the tight limit satisfies the limit equation \eqref{eq:YM in} with the initial distribution $\mu$. By  uniqueness in law  of equation \eqref{eq:YM in}, which follows from pathwise uniqueness in Proposition~\ref{lem:4.7} and Yamada--Watanabe Theorem,  the law of $Q^L$ converges weakly to the law of $Q$ in $C([0,\infty);\cQ)$, as $L\to \infty$.
As a result, for $F\in C^\infty_{cyl}(\cQ)$ we have
\begin{align}\label{eq:5.13}
	 \int P_t^LF\dif \mu_L=\E F(Q^L(t))\to \E F(Q(t))=\int P_tF\dif \mu, \quad L\to\infty.
\end{align}
Similarly, we obtain
$$
\int P_t^LF\dif \nu_L\to \int P_tF\dif \nu, \quad L\to\infty.
$$
Moreover, by the condition $K_\cS >0$,
there exists $a>1$ such that
\begin{equ}[e:tildeK]
\widetilde{K}_{\cS}=C_{\Ric,N}-(4+4\sqrt{a})N|\beta|(d-1)>0.
\end{equ}
We then invoke  Lemma \ref{lem:4.6} to have
\begin{align*}
	&\Big| \int F\dif \mu-\int F\dif \nu\Big|
	=\Big| \int P_tF\dif \mu-\int P_tF\dif \nu\Big|
	=\lim_{L\to\infty}\Big|	\int P_t^LF\dif \mu_L-\int P_t^LF\dif \nu_L\Big|
	\\
	&=\lim_{L\to\infty}
	\inf_{\pi\in \sC(\mu_LP_t^L,\nu_LP_t^L)}
	\Big|	\int (F(x)-F(y))\dif \pi(x,y)\Big|
	\leq C_F
	\lim_{L\to\infty}
	W_2^{\rho_{\infty,a}}(\mu_L P_t^L,\nu_L P_t^L)
	\\
	&\leq C_F e^{-\widetilde{K}_{\cS}t}\lim_{L\to\infty}W_2^{\rho_{\infty,a}}(\mu_L ,\nu_L )\leq C(a) e^{-\widetilde{K}_St}
\end{align*}
where $C_F$ only depends on $F$, 
 and the 
constant $C(a)$ is independent of $L$
by boundedness of $\rho_{\infty,a}$,
i.e. $\rho_{\infty,a}(Q,Q')<\infty$ for any $Q,Q'\in \cQ$.
Letting $t\to\infty$ we have 
\begin{align*}
	\Big|	\int F\dif \mu-\int F\dif \nu\Big|=0.
\end{align*}
Hence, $\mu=\nu$. This gives the uniqueness of invariant measure,
as denoted by $\muYM$ in the theorem.

By Theorem \ref{th:in1}, every tight limit is the invariant measure of \eqref{eq:YM in}. Hence, it is also unique and the second result of the theorem follows.

To prove the last statement \eqref{e:WnuP},
taking now an {\it arbitrary} probability measure $\nu$ on $\cQ$  we also have $\{\nu_L\}$  constructed similarly as above. 
We denote by $Q^{\nu_L}$ and $Q^\nu$ the processes starting from $\nu_L$ and $\nu$, respectively.  
We also have, as in \eqref{eq:5.13},
\begin{align}\label{eq:5.13again}
	 \int P_t^LF\dif \nu_L
	 =\E F(Q^{\nu_L}(t))
	 \to \E F(Q^\nu(t))
	 =\int P_tF\dif \nu, \quad L\to\infty.
\end{align}

Recall $\{\mu_L\}$ and $\mu=\muYM$ as the unique invariant measure given above. 
By triangle inequality and Lemma \ref{lem:4.6} we have for $t\geq0$
\begin{align}
	W_2^{\rho_{\infty,a}}(\nu P_t,\mu)
	&\leq W_2^{\rho_{\infty,a}}(\nu_L P_t^L,\nu P_t)
	+W_2^{\rho_{\infty,a}}(\nu_L P_t^L,\mu_L P_t^L)
	+W_2^{\rho_{\infty,a}}(\mu,\mu_L )\no
	\\
	&\leq W_2^{\rho_{\infty,a}}(\nu_L P_t^L,\nu P_t)
	+ e^{-\widetilde{K}_{\cS}t}W_2^{\rho_{\infty,a}}(\mu_L ,\nu_L )
	+W_2^{\rho_{\infty,a}}(\mu,\mu_L )  \no
	\\
	&\leq \E\rho_{\infty,a}^2(Q^{\nu_L}(t), Q^\nu(t))
	+C(a) e^{-\widetilde{K}_{\cS}t}
	+W_2^{\rho_{\infty,a}}(\mu,\mu_L ).\label{eq:c}
\end{align}
As $\cQ$ is compact w.r.t. the distance $\rho_\infty^a$, 
$Q^{\nu_L}(t)$ is tight in $(\cQ,\rho_\infty^a)$. 
Using \eqref{eq:5.13again} we then have for $t\geq0$
 $$\E\rho_{\infty,a}^2(Q^{\nu_L}(t), Q^\nu(t))\to0,\quad  L\to\infty.$$
Letting $L\to\infty$ in \eqref{eq:c}, we have
\begin{align*}
	W_2^{\rho_{\infty,a}}(\nu P_t,\mu)
	\leq C(a) e^{-\widetilde{K}_{\cS}t},
\end{align*}
which is  \eqref{e:WnuP}.
It is clear from \eqref{e:tildeK} that 
 $\widetilde{K}_{\cS}$   only depends on the constant $a$,  $d$, $\beta$ and dimension of $G$.
\end{proof}

 \appendix
\renewcommand{\appendixname}{Appendix~\Alph{section}}
\renewcommand{\theequation}{A.\arabic{equation}}
\section{Construction of coupling}
\label{sec:coupling}

In this appendix, we follow \cite{Ken86}, \cite[Section~2.1]{Wang}  to construct the coupling 
$$
(Q(t),Q'(t))_{t\geq0}
$$ 
starting from $(Q, Q')$ by approximation,
 and prove \eqref{eq:Ito}.
The coupling argument presented here is similar with  \cite[Chapter~2]{Wang} but a main difference is that  the above reference applies It\^o's formula to a distance on a given manifold -- which would be $\rho_L$ (not $\rho_e$) in our case, but we will
apply It\^o's formula to the quantity $\chi_R(\rho_e^2)$.

Before proceeding we recall the basic definitions and notations (c.f. \cite[Chapter~2]{Hsu} for more detailed explanations).
Recall that  $\cQ_L$ is a Riemannian manifold with dimension $d=|E^+_{\Lambda_L}|\dg$.
Let $\sO(\cQ_L)$ be the orthonormal frame bundle over $\cQ_L$, which is  a $d(d+1)/2$-dimensional Riemannian manifold.
Given $l\in \mR^d$, let $H_l$ be the corresponding horizontal vector field on $\sO(\cQ_L)$.
Denote by $\pi: \sO(\cQ_L) \rightarrow \cQ_L$ be the canonical projection.
For any $\Phi\in \sO(\cQ_L)$ we have $\Phi l\in T_{\pi \Phi}\cQ_L$ and $H_l(\Phi)\in T_{\Phi}\sO(\cQ_L)$ is the horizontal lift of $\Phi l\in T_{\pi \Phi}\cQ_L$ to $\Phi$.
In particular, let $\{l_i\}_{i=1}^d$ be an orthonormal basis of $\mR^d$, define the horizontal Laplace operator
$$\Delta_{\sO(\cQ_L)}\eqdef \sum_{i=1}^dH^2_{l_i},$$
which is independent of the choice of the basis $\{l_i\}$.
Moreover, for any vector field $Z$ on $\cQ_L$ we define its horizontal lift by
$H_{\Phi}Z\eqdef H_{\Phi^{-1}Z}(\Phi)$ for $\Phi\in \sO(\cQ_L)$, where $\Phi^{-1}Z$ is the unique vector $l\in \mR^d$ such that $Z_{\pi \Phi}=\Phi l$.

As in \cite[Chapter~6]{Hsu}, for a function $f$ defined on $\sO(\cQ_L)\times \sO(\cQ_L)$,
we denote by $H_{l_i,1}f$ and $H_{l_i,2}f$  the derivatives of $f$ with respect to the horizontal vector field $H_{l_i}$ on the first and the second variable respectively. The horizontal Laplacian on the first and the second variable are
$$\Delta_{\sO(\cQ_L),1}=\sum_{i=1}^dH_{l_i,1}^2,\quad \Delta_{\sO(\cQ_L),2}=\sum_{i=1}^dH_{l_i,2}^2.$$


{\bf Construction of coupling.}
Consider the following Stratonovich SDE with $Q(t)\eqdef \pi(\Phi_t)$
\begin{equation}\label{eq2.1}
	\dif \Phi_t=\sum_{i=1}^d H_{\Phi_t}(\Phi_t)\circ \dif N_t,\qquad
	\dif N_t=\sqrt2\dif B_t+\Phi_t^{-1}\nabla \cS(Q(t))\dif t,
\end{equation}
where  $(B_t)_{t\ge 0}$ is a standard
$d$-dimensional Brownian motion and $\pi \Phi_0=Q$. Then $Q(t)$ is an $\cL_L$-diffusion process ($\cL_L$ as in \eqref{eq:L})
 starting from $Q$ and $\Phi_t$ is called its horizontal lift.

As the Riemannian distance is not smooth on $C$ and $D$  defined in \eqref{e:CD}, we introduce cut-off approximation as follows:
For any $n\geq 1$ and $\eps\in (0,1)$, let $h_{n,\eps}\in C^\infty(\cQ_L\times \cQ_L)$ such that $0\leq h_{n,\eps}\leq {1-\eps}, h_{n,\eps}|_{C_n^c}={1-\eps}$ and $h_{n,\eps}|_{C_{2n}}=0$, where
\begin{align*}
	C_n\eqdef \Big\{(Q,Q'):\rho_{\cQ_L\times \cQ_L}((Q,Q'),C)\leq \frac1n\Big\},\quad n\geq 1,
\end{align*}
with $\rho_{\cQ_L\times \cQ_L}$ the Riemannian distance on $\cQ_L\times \cQ_L$. Let $g_n\in C^\infty(\cQ_L\times \cQ_L)$ such that $0\leq g_n\leq 1$, $g_n(Q,Q')=0$ if $\rho_L(Q,Q')\leq \frac1{2n}$ and $g_n(Q,Q')=1$ if $\rho_L(Q,Q')\geq \frac1{n}$.
Let $\Psi_t^{n,\eps}$ and $N_t^{n,\eps}$ solve the following SDE
with $\widetilde{Q}^{n,\eps}(t) \eqdef \pi \Psi_t^{n,\eps}$
\begin{equation}\label{eq2.2}
	\aligned
	\dif \Psi_t^{n,\eps} &=\sum_{i=1}^d H_{\Psi_t^{n,\eps}}(\Psi_t^{n,\eps})\circ \dif N_t^{n,\eps},\\
	\dif N_t^{n,\eps}&= \sqrt2(h_{n,\eps}g_n)(Q(t),\widetilde{Q}^{n,\eps}(t))(\Psi_t^{n,\eps})^{-1}
	P_{Q(t),\widetilde{Q}^{n,\eps}(t)}\Phi_t \, \dif B_t
	\\
	&\quad+\sqrt{2(1-(h_{n,\eps}g_n)^2(Q(t),\widetilde{Q}^{n,\eps}(t)))} \, \dif B_t'
	+(\Psi_t^{n,\eps})^{-1}\nabla \cS(\widetilde{Q}^{n,\eps}(t)) \, \dif t,
	\endaligned
\end{equation}
where $B_t'$ is a Brownian motion in $\mR^d$ independent of $B_t$,  $\pi \Psi_0=Q'$
 and $P_{Q,Q'}:T_Q\cQ_L\to T_{Q'}\cQ_L$ is parallel translation along the geodesic from $Q$ to $Q'$.
 As the coefficients are smooth on the compact manifold, we have unique solutions $(\Phi_t,\Psi_t^{n,\eps})$ to \eqref{eq2.1} and \eqref{eq2.2}.

The generator for  $(\Phi_t,\Psi_t^{n,\eps})$ is then given by
\begin{align*}
	\cL^{n,\eps}_{\sO(\cQ_L)}= \Delta_{\sO(\cQ_L),1}+\Delta_{\sO(\cQ_L),2}+2\sum_{i=1}^d(h_{n,\eps}g_n)H_{l_i^*,2}H_{l_i,1}+H_\Phi \nabla \cS+H_\Psi \nabla \cS,
\end{align*}
with $l_i^*(\Phi,\Psi)=\Psi^{-1}P_{\pi\Phi,\pi\Psi}\Phi l_i\in \mR^d$.

We then consider the following approximation to the generator $\cL^c$  defined in \eqref{e:Lc}.
\begin{align*}
	\cL^{n,\eps}=\sum_{e\in E_{\Lambda_{L}}^+}\Delta_{Q_e}+\sum_{e\in E_{\Lambda_{L}}^+}\Delta_{Q_e'}+2g_nh_{n,\eps}\sum_{i,j=1}^{\dim \q_L}\<P_{Q,Q'}v_i,v'_j\>_{T_{Q'}\cQ_L}v_iv'_j+\nabla\cS(Q)+ \nabla\cS(Q'),
\end{align*}
with $\{v_i\}, \{v_j'\}$ as in the definition of $\cL^c$ in  \eqref{e:Lc}.

It is easy to see that $\cL^{n,\eps}_{\sO(\cQ_L)}$ is a lift of $\cL^{n,\eps}$.
Namely, for $f\in C^2(\cQ_L\times \cQ_L)$ and $F(\Phi,\Psi)=f(\pi\Phi,\pi\Psi)$,
 one has
$\cL^{n,\eps}_{\sO(\cQ_L)}F(\Phi,\Psi)=\cL^{n,\eps}f(\pi\Phi,\pi\Psi)$.
We then know $(Q(t),\widetilde{Q}^{n,\eps}_t)=(\pi\Phi_t,\pi\Psi_t)$ starting from $(Q,Q')$ is generated by $\cL^{n,\eps}$
(c.f. \cite[Section~2.1]{Wang}).
Since the marginal operators of $\cL^{n,\eps}$ coincide with $\cL_L$, $(Q_t,\widetilde{Q}^{n,\eps}_t)$ gives a coupling of $\cL_L$-diffusions starting from different initial data.

Let $\mP^Q$ denote the law of $\cL_L$-diffusion $(Q(t))_{t\geq0}$ starting from $Q\in \cQ_L$ in $C([0,\infty);\cQ_L)$ endowed with the distance
\begin{align*}
	\widetilde{\rho}_L(Q,Q')\eqdef\sum_{n=0}^\infty 2^{-n}\Big(1\wedge \sup_{t\in [n,n+1]}\rho_L(Q(t),Q'(t))\Big), \quad Q, Q'\in C([0,\infty);\cQ_L).
\end{align*}
As  the marginal law of  $(Q,\widetilde{Q}^{n,\eps})_{n,\eps}$
is tight in $C([0,\infty);\cQ_L)$, the joint law $\mP^{Q,Q'}_{n,\eps}$ of $(Q,\widetilde{Q}^{n,\eps})_{n,\eps}$  is also tight.
Therefore, for every $\eps>0$ there exists a probability measure
$\mP_\eps^{Q,Q'}$ and a subsequence, which is still denoted by $\mP_{n,\eps}^{Q,Q'}$ such that
$\mP_{n,\eps}^{Q,Q'}\to \mP_\eps^{Q,Q'}$ weakly in $C([0,\infty);\cQ_L)$. Moreover, we could find  $\mP_{\eps_k}^{Q,Q'}$ and $\mP^{Q,Q'}$ such that $\mP_{\eps_k}^{Q,Q'}\to\mP^{Q,Q'}$ weakly in $C([0,\infty);\cQ_L)$.
$\mP^{Q,Q'}$ is then the desired coupling of $\mP^Q$ and $\mP^{Q'}$.


{\bf Proof of inequality \eqref{eq:Ito}.}
In the following we prove \eqref{eq:Ito}.

Following \cite{Ken86}, \cite[Section 2.1]{Wang} we apply It\^o's formula to
$\chi_R(\rho^2)(Q_e(t),\widetilde{Q}^{n,\eps}_e(t))$ and use $\chi'_R\geq0$ to obtain
\begin{equs}[eq:Itoe]
	\dif\chi_R & (\rho^2)(Q_e(t),  \widetilde{Q}^{n,\eps}_e(t))
	\\
	&=\dif M_t^{n,\eps}+2(4\rho^2\chi_R''(\rho^2)+2\chi_R'(\rho^2))(1-g_nh_{n,\eps})(Q(t),\widetilde{Q}^{n,\eps}(t))\dif t-\dif L_t^{n,\eps}
	\\
	&\quad +\1_{C^c\cap D^c}2\chi_R'(\rho^2)\rho\Big( g_nh_{n,\eps}I_{\cS}+(1-g_nh_{n,\eps})Z\Big)(Q(t),\widetilde{Q}^{n,\eps}(t))\dif t.
\end{equs}
Here $\rho=\rho(Q_e(t),\widetilde{Q}^{n,\eps}_e(t))$ and $f(\rho^2)=f(\rho^2)(Q_e(t),\widetilde{Q}^{n,\eps}_e(t))$ for $f\in \{\chi_R', \chi_R''\}$. The term $M_t^{n,\eps}$ is a martingale with quadratic variation process given by
$$
\int_0^t4(2\chi_R'(\rho^2)\rho)^2(1-g_nh_{n,\eps})(Q(s),\widetilde{Q}^{n,\eps}(s))\dif s,
$$
and $L_t^{n,\eps}$ is a non-decreasing process
which increases only when $
(Q(t),\widetilde{Q}^{n,\eps}(t))\in C$. The term $I_{\cS}$ is given in \eqref{e:IS} and finally
\begin{equ}[e:ZQQ]
	Z(Q,Q')=\Delta\rho(\cdot, Q'_e)(Q_e)+\Delta\rho(Q_e, \cdot)(Q_e')+\Big((\nabla \cS)\hat\rho_e(\cdot,Q_e')\Big)(Q)
	+\Big((\nabla \cS)\hat\rho_e(Q_e,\cdot)\Big)(Q').
\end{equ}
with $\Delta$ being the Laplace--Beltrami operator on $G$.

In fact,  to derive \eqref{eq:Itoe},
we may first apply It\^o's formula to $\dif \rho(Q_e(t),\widetilde{Q}^{n,\eps}_e(t))$
and then apply It\^o's formula again to $\chi_R(\rho^2)$.
Suppose for now that the quadratic variation process  of the martingale part $M^\rho$ for $\rho$ is given by
\begin{align}\label{eq:va}
	\<M^\rho_t\>=	4\int_0^t(1-g_nh_{n,\eps})(Q(s),\widetilde{Q}^{n,\eps}(s))\dif s.
\end{align}
Then the second term on the r.h.s. of \eqref{eq:Itoe} comes from $\<M^\rho_t\>$. As explained in \eqref{e:IS}--\eqref{eq:d1} the last line in \eqref{eq:Itoe} comes from $\cL^{n,\eps}\rho$.

We now verify \eqref{eq:va}.
Using \eqref{eq2.1} and \eqref{eq2.2} we have,
for $\widetilde\rho(\Phi,\Psi)=\rho(\pi_e\pi\Phi,\pi_e\pi\Psi)$,
\begin{align*}
	\dif\<M^\rho_t\>
	= 2\sum_{i=1}^d\Big[
	\Big|H_{l_i,1}\widetilde\rho +g_nh_{n,\eps} H_{l_i^*,2}\widetilde\rho \Big|^2
	+\big(1-(g_nh_{n,\eps})^2\big)\big|H_{l_i,2}\widetilde\rho\big|^2
	\Big]\dif t.	
\end{align*}
As $\widetilde\rho$ only depends on the component at $e$,
this now boils down to standard arguments. Namely, since
  the above quantity is independent of the choice of basis,
  we can write the above terms as vector fields on $G$, and
choose basis in such a way that one basis vector is
tangent to the geodesic and the others are perpendicular to the geodesic, and use the first variation formula to derive \eqref{eq:va} (c.f. \cite[Section 6.6]{Hsu}).

In the following we send $n\to\infty, \eps\to 0$ to derive \eqref{eq:Ito}.

Let $(x(t),y(t))$ be the canonical process on $(C([0,\infty);\cQ_L)\times C([0,\infty);\cQ_L),\sF\times \sF)$ and let $\{\sF_t\}_{t\geq0}$ be the natural filtration. On the support of $\chi_R$ using Laplacian comparison theorem (c.f. \cite[Corollary 3.4.4]{Hsu}) we know that $Z$ defined in  \eqref{e:ZQQ}  satisfies $Z\leq C_R$ for some constant $C_R>0$.
Since the support of $\chi_R(\rho^2)\subset \{\rho^2\geq \frac1{2R}\}$ and  for $n$ large enough  $\rho_L(x,y)\geq \rho(x_e,y_e) \geq 1/\sqrt{2R}\geq 1/n$, $g_n(x,y)=1$  on the support of $\chi_R(\rho^2)$, we obtain that
\begin{equation*}
	\aligned
	\chi_R(\rho^2)(Q_e(t),\widetilde{Q}^{n,\eps}_e(t))&-\int_0^t2(4\rho^2\chi_R''(\rho^2)+2\chi_R'(\rho^2))(1-h_{n,\eps})(Q(s),\widetilde{Q}^{n,\eps}(s))
	\\&+2\chi_R'(\rho^2)\rho\Big( h_{n,\eps}J+(1-h_{n,\eps})C_R\Big)(Q(s),\widetilde{Q}^{n,\eps}(s))\,\dif s
	\endaligned
\end{equation*}
is a supermartingale, where $J$ is as in \eqref{eq:Ito}. Therefore,
\begin{align*}
	S_t^{n,\eps}\eqdef	\chi_R(\rho^2)(x_e(t),y_e(t))&-\int_0^t2(4\rho^2\chi_R''(\rho^2)+2\chi_R'(\rho))(x_e(s),y_e(s))(1-h_{n,\eps})(x(s),y(s))
	\\&+2(\chi_R'(\rho^2)\rho)(x_e(s),y_e(s))\Big( h_{n,\eps}J+(1-h_{n,\eps})C_R\Big)(x(s),y(s)) \, \dif s
\end{align*}
is a $\mP_{n,\eps}^{Q,Q'}$-supermartingale.

Furthermore, since by \cite[Lemma~2.1.2]{Wang} $\mP_\eps^{Q,Q'}((x(s),y(s))\in C)=0$ for $s>0$,
using the same argument as in  \cite[Proof part (b) of Theorem 2.1.1]{Wang} we let $n\to\infty$ and obtain that
\begin{align*}
	S_t^\eps\eqdef \chi_R(\rho^2)(x_e(t),y_e(t))&-\int_0^t2\eps(4\rho^2\chi_R''(\rho^2)+2\chi_R'(\rho))(x_e(s),y_e(s))
	\\&+2(\chi_R'(\rho^2)\rho)(x_e(s),y_e(s))\Big( (1-\eps)J+\eps C_R\Big)(x(s),y(s))\,\dif s
\end{align*}
ia a $\mP_{\eps}^{Q,Q'}$-supermartingale.

Letting $\eps\to0$ we obtain that
\begin{align*}
	S_t\eqdef \chi_R(\rho^2)(x_e(t),y_e(t))&-2\int_0^t(\chi_R'(\rho^2)\rho)(x_e(s),y_e(s))J(x(s),y(s)) \, \dif s
\end{align*}
is a $\mP^{Q,Q'}$-supermartingale.
Hence, by Doob--Meyer's decomposition
\begin{align}\label{eq:a1}
	\dif \chi_R\Big(\rho^2(x_e(t),y_e(t))\Big)
	=\dif M_t
	+2(\chi_R'(\rho^2)\rho)
	\Big(x_e(t),y_e(t)\Big)J(x(t),y(t))\dif t
	-\dif L_t,
\end{align}
with $M$ a martingale and $L$ a predictable increasing process.

In the following we prove $M=0$. Similarly we use the above argument for $f(\chi_R(\rho^2(x_e(t),y_e(t))))$ with $0\leq f\in C^2(\mR^+), f'\geq0$ and we have that
\begin{align*}
	f\Big(\chi_R(\rho^2)(x_e(t),y_e(t))\Big)
	-2\int_0^t
	\Big(f'(\chi_R(\rho^2))\chi_R'(\rho^2)\rho\Big)
	\Big(x_e(s),y_e(s)\Big)J(x(s),y(s)) \, \dif s
\end{align*}
is a $\mP^{Q,Q'}$-supermartingale.
Choosing $f(r)=\exp(m r), m\in \mN$ and setting
$$
\Xi_t\eqdef
\exp\Big(m\chi_R\Big(\rho^2(x_e(t),y_e(t))\Big)\Big),
$$ we  have that
\begin{align*}
	\Xi_t&-2m\int_0^t\Xi^1_sJ(x(s),y(s))\,\dif s
\end{align*}
is a $\mP^{Q,Q'}$-supermartingale with $$\Xi_s^1=\Big(\exp(m\chi_R(\rho^2))\chi_R'(\rho^2)\rho\Big)(x_e(s),y_e(s)).$$
By Doob--Meyer's decomposition,
\begin{align}\label{eq:aa1}
	\dif \Xi_t= \dif \widetilde{M}_t+2m\Xi_t^1J(x(t),y(t))\dif t-\dif \widetilde{L}_t,
\end{align}
where $\widetilde{M}_t$ is a martingale and $\widetilde{L}_t$ is predictable increasing.

On the other hand, applying It\^o's formula to \eqref{eq:a1} we obtain
\begin{align}\label{eq:aa2}
	\dif\Xi_t
	=m\,\Xi_t\,\dif M_t
	+\frac12m^2\,\Xi_t \,\dif\<M_t,M_t\>
	+2m\,\Xi_t^1\,J(x(t),y(t))\,\dif t
	-m\,\Xi_t\,\dif L_t.
\end{align}
Comparing \eqref{eq:aa1} and \eqref{eq:aa2} we obtain $\dif\<M,M\>_t\leq \frac2m\dif L_t$. Letting $m\to\infty$, we get $\dif\<M,M\>_t=0$. Hence, \eqref{eq:Ito} follows.

\bibliographystyle{alphaabbr}
\bibliography{refs}

\end{document}